\theoremstyle{plain}
\newtheorem{theorem}{Theorem}[section]
\newtheorem{lemma}[theorem]{Lemma}
\newtheorem{proposition}[theorem]{Proposition}
\theoremstyle{definition}
\newtheorem{definition}{Definition}[section]
\newtheorem{example}{Example}[section]
\newcommand{\mr}{\mathsf{mr}}
\title{}
\begin{document}
\title[An exact enumeration of vertex connectivity]{An exact enumeration of vertex connectivity of the enhanced power graphs of finite nilpotent groups}
\author[Sudip Bera]{Sudip Bera}
\author[Hiranya Kishore Dey]{Hiranya Kishore Dey}
\address[Sudip Bera]{Faculty of Mathematics, DA-IICT, Gandhinagar 382007}
\email{sudip\_bera@daiict.ac.in}
\address[Hiranya Kishore Dey]{Department of Mathematics, Indian  Institute of Science, Bangalore 560 012}
\email{hiranyadey@iisc.ac.in} 
\keywords{Enhanced power graph; Nilpotent group; Vertex connectivity}
\subjclass[2010]{05C25; 20D15; 05A15}

\maketitle	
\begin{abstract}
The enhanced power graph of a group $G$ is a graph with vertex set $G,$ where two distinct vertices $x$ and $y$ are adjacent if and only if there exists an element $w$ in $G$ such that both $x$ and $y$ are powers of $w.$ In this paper, we determine the vertex connectivity of the enhanced power graph of any finite nilpotent group.
\end{abstract}
\section{Introduction}
\label{sec:intro} 
The examination of graphs arising from groups provides several benefits. Firstly, it permits us to classify the resulting graphs. Secondly, it empowers us to identify algebraic structures that possess isomorphic graphs. Lastly, it aids in comprehending the interdependence between algebraic structures and their corresponding graphs. Moreover, these graphs have numerous valuable applications, as demonstrated by sources such as \cite{surveypwrgraphkac1, cayleygraphsckry}. Furthermore, they are closely connected to automata theory \cite{automatatheory}. Various types of graphs exist, including but not limited to commuting graphs of groups \cite{ijraeljofmathematics, braurflower, onboundingdiamcommuting}, power graphs of semigroups \cite{undpwrgraphofsemgmainsgc1, directedgrphcompropofsemgrpkq3}, groups \cite{combinatorialpropertyandpowergraphsofgroupskq1}, intersection power graphs of groups \cite{intersectionpwegraphb3}, enhanced power graphs of groups \cite{firstenhcedpwrstrctreaacns1,enhancedpwrgrapbb3,enhancedpower-jkum}, and comaximal subgroup graphs \cite{das-saha-saba}. These graphs have been established to investigate the properties of algebraic structures using graph theory.

The commuting graph, a highly significant and extensively researched graph, is linked to a group $G$. This graph has been thoroughly investigated in \cite{ijraeljofmathematics, braurflower, onboundingdiamcommuting}. To formally define the commuting graph, we turn to the following:

\begin{definition} [\cite{ijraeljofmathematics, braurflower, onboundingdiamcommuting}]
Consider a group $G$. The commuting graph of $G$, denoted as $\mathcal{C}(G)$, is a simple graph where the vertex set comprises non-central elements of $G$. Two distinct vertices $u$ and $v$ are connected in this graph if and only if $u$ and $v$ commute, that is, $uv=vu$.
\end{definition}

Around 2000, Kelarev and Quinn \cite{combinatorialpropertyandpowergraphsofgroupskq1} introduced the concept of a power graph in the realm of semigroup theory.
\begin{definition}[\cite{surveypwrgraphkac1,undpwrgraphofsemgmainsgc1, combinatorialpropertyandpowergraphsofgroupskq1}]\label{defn: powr graph}
Given a group $G,$ the \emph{power graph} $\mathcal{P}(G)$ of $G$ is a simple graph with vertex set $G$, where two vertices $u$ and $v$ are connected by an edge if and only if one of them is the power of the other.
\end{definition}
In this paper, our topic of interest is a novel graph, called the \emph{enhanced power graph}, which was introduced by Aalipour et al. \cite{firstenhcedpwrstrctreaacns1} to evaluate the proximity between the power graph and the commuting graph of a group $G.$ 
\begin{definition}[\cite{firstenhcedpwrstrctreaacns1}]\label{defn: enhcdpowr graph}
Let $G$ be a group. The \emph{enhanced power graph} of $G,$ denoted as $\mathcal{G}_E(G),$ is the graph with the vertex set $G,$ where two vertices $u$ and $v$ are connected if and only if there exists an element $w \in G$ such that both $u \in \langle w \rangle $ and $v \in \langle w \rangle.$

Given a group $G,$ the \emph{proper enhanced power graph} of $G,$ denoted by $\mathcal{G}^{**}_E(G),$ is the graph obtained by deleting all the dominating vertices from the enhanced power graph $\mathcal{G}_E(G).$ Moreover, by $\mathcal{G}^{*}_E(G)$ we denote the graph obtained by deleting only the identity element of $G$ and this is called \emph{deleted enhanced power graph} of $G.$ Note that if there is no such dominating vertex other than identity, then $\mathcal{G}_{E}^{*}(G)=\mathcal{G}_{E}^{**}(G).$
\end{definition}
Many works have been done in the last decade that investigate various properties of the enhanced power graphs of finite groups. Aalipour et al. \cite{firstenhcedpwrstrctreaacns1} 
characterized the finite groups such that any arbitrary pair of these three graphs (power, commuting, enhanced) is equal. 
Ma and She  in \cite{ma-she} derived the metric dimension whereas Hamzeh et al. in \cite{Hamzeh-ashrafi} derived the automorphism groups of enhanced power graphs of finite groups. Determining the vertex connectivity of the power graph and the enhanced power graph of a group has been a challenging and well-investigated problem for the last few years. Many researchers have attempted and found several results for the vertex connectivity of the power graphs and enhanced power graphs. Chattopadhyay, Patra, and Sahoo in \cite{chatto-sahoo-patro, chatto-sahoo-patro-ii} found the exact vertex connectivity for the power graph of the cyclic group $\mathbb{Z}_n$ for most of the values of $n$. In \cite{chatto-sahoo-patro-iii} they have given exact values for the vertex connectivity of power graphs associated with the nilpotent groups which have all Sylow $p$-subgroups cyclic, except possibly one. Bera et. al. in \cite{bera-dey-sajal} gave a bound for the vertex connectivity of the enhanced power graph of any abelian group and in a subsequent work \cite{bera-dey-jgt} Bera and Dey improved the bound. 
In this paper, our focus is to determine the exact value of the vertex connectivity of the enhanced power graphs of finite nilpotent groups. 

\subsection{Basic definitions and notations}\label{subsection:definitions}
For the convenience of the reader and also for later use, we recall some basic definitions and notations about graphs. 
Let $\Gamma$ be a graph with vertex set $V$. 
Two elements $u$ and $v$ are said to be adjacent if there is an edge between them. 
A \emph{path} of length $k$ between two vertices $v_0$ and $v_k$ is an alternating
sequence of vertices and edges $v_0, e_0, v_1, e_1, v_2, \cdots , v_{k-1}, e_{k-1}, v_k$, where the $v_i'$s are distinct
(except possibly the first and last vertices) and $e_i'$s are the edges $(v_i, v_{i+1}).$  A graph $\Gamma$ is said to be \emph{connected} if for any pair of vertices $u$ and $v,$ there exists a path between $u$ and $v.$ 
 $\Gamma$ is said to be \emph{complete} if any two distinct vertices are adjacent. 
A vertex of a graph $\Gamma$ is called a \emph{dominating vertex} if it is adjacent to every other
vertex. 
The \emph{vertex connectivity} of a graph $\Gamma,$ denoted by $\kappa{(\Gamma)}$, is
the minimum number of vertices that need to be removed from the vertex set $\Gamma$ so that the
induced subgraph of $\Gamma$ on the remaining vertices is disconnected. The complete graph with $n$ vertices has vertex connectivity $n-1.$ 

Throughout this paper we consider $G$ as a finite group.  $|G|$ denotes the cardinality of the set $G.$ For a prime $p,$ a group $G$ is said to be a $p$-group if $|G|=p^{r}, r\in \mathbb{N}.$ If $|G|= p_{\ell}^{r}$ for some prime $p_{\ell}$, then we say that $G$ is a $p_{\ell}$-group.
 For any element $g \in G, \text{o}(g)$ denotes the order of the element $g.$ Let $m$ and $n$ be any two positive integers, then the greatest common divisor of $m$ and $n$ is denoted by $\gcd(m, n).$ The Euler's phi function $\phi(n)$ is the number of integers $k$ in the range $1 \leq k \leq n$ for which the  $\gcd(n, k)$ is equal to $1.$
   
\section{Main Results}
In this section, we will showcase our primary findings. To begin, we will first review the structure of a finite nilpotent group.
\subsection{Nilpotent group}\label{Defn: Nilpotent group}
A finite group $G$ is nilpotent if and only if $G\cong P_1\times \cdots\times P_r,$ where for each $i\in[r], P_i$ is a Sylow subgroup of order $p_i^{t_i}$ of $G.$ Let $G_1$ be a finite nilpotent group having no Sylow subgroups which are either cyclic or generalized quaternion. Now, for a finite nontrivial nilpotent group $G,$ we have the following cases:
\begin{enumerate}
\item 
No Sylow subgroups of $G$ are either cyclic or generalized quaternion.
\item
$G$ has cyclic Sylow subgroups. In this case, $G = G_1\times \mathbb{Z}_n,$ where $G_1$ is described as above and  $\gcd(|G_1|, n)=1.$
\item
$G$ has a Sylow subgroup isomorphic to generalized quaternion. Here $G = G_1\times Q_{2^k},$  and $\gcd(|G_1|, 2)=1.$
\item
$G$ has both a cyclic Sylow subgroup and a Sylow subgroup isomorphic to generalized
quaternion. In this case, $G = G_1\times \mathbb{Z}_n\times Q_{2^k},$ where  $\gcd(|G_1|, n)=\gcd(|G_1|, 2)=\gcd(n, 2)=1.$ 
\end{enumerate}

In \cite{enhancedpower-jkum}, Kumar et. al. have obtained the following result for the vertex connectivity of the enhanced power graphs of certain nilpotent groups. 

\begin{theorem}\cite[Theorem 5.6]{enhancedpower-jkum}
\label{kumar-ma-parveen}
Let $G=P_1 \times  \cdots \times P_r $ be a non-cyclic nilpotent group with $r \geq 2$. Suppose that each Sylow subgroup of $G$ is cyclic except $P_k$ for some $k \in [r].$
\begin{enumerate}
    \item If $P_k$ is not generalized quaternion, then $\kappa(\mathcal{G}_E(G))= \prod_{i=1, i\neq k} ^r |P_i|. $

    \item If $P_k$ is generalized quaternion, then $\kappa(\mathcal{G}_E(G)= 2 \prod_{i=1, i\neq k} ^r |P_i| $
\end{enumerate}

\end{theorem}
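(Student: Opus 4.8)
The plan is to first reduce the whole computation to the single non-cyclic Sylow factor $P_k$. Write an element of $G$ as $w=(w_1,\dots,w_r)$ with $w_i\in P_i$. Since the orders $|P_i|$ are pairwise coprime, $\langle w\rangle=\langle w_1\rangle\times\cdots\times\langle w_r\rangle$, so $u\in\langle w\rangle$ if and only if $u_i\in\langle w_i\rangle$ for every $i$. Consequently two vertices $u,v$ are adjacent in $\mathcal{G}_E(G)$ if and only if for each $i$ the coordinates $u_i,v_i$ lie in a common cyclic subgroup of $P_i$. For $i\neq k$ the factor $P_i$ is cyclic, so this condition is automatic; hence $u\sim v$ in $\mathcal{G}_E(G)$ if and only if $u_k,v_k$ lie in a common cyclic subgroup of $P_k$. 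In other words $\mathcal{G}_E(G)$ is the blow-up of $\mathcal{G}_E(P_k)$ obtained by replacing each vertex $x\in P_k$ by a clique $B_x$ of size $m:=\prod_{i\neq k}|P_i|$ (the set of elements of $G$ with $k$-th coordinate $x$), where two cliques $B_x,B_y$ are completely joined when $x\sim y$ in $\mathcal{G}_E(P_k)$ and otherwise carry no edges between them.

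Next I would identify the dominating vertices. From the description above, a vertex of $G$ is dominating in $\mathcal{G}_E(G)$ exactly when its $k$-th coordinate is a dominating vertex of $\mathcal{G}_E(P_k)$. A short argument using the classical fact that a finite $p$-group with a unique subgroup of order $p$ is cyclic or generalized quaternion shows that the dominating vertices of $\mathcal{G}_E(P_k)$ are precisely the identity when $P_k$ is not generalized quaternion, and the identity together with the unique involution $z$ when $P_k$ is generalized quaternion. Indeed, if $x\neq e$ were dominating then every element of prime order would have to lie in the cyclic group $\langle x\rangle$, forcing a unique subgroup of order $p$. Thus the set $D$ of dominating vertices of $\mathcal{G}_E(G)$ has size $m$ in case (1) and $2m$ in case (2).

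The theorem then follows from the identity $\kappa(\mathcal{G}_E(G))=|D|$, which I would prove by two matching bounds. For the lower bound, observe that if a set $S$ with $|S|<|D|$ is removed, some dominating vertex survives and is adjacent to every remaining vertex, so $\mathcal{G}_E(G)-S$ stays connected; hence every vertex cut contains $D$ and $\kappa(\mathcal{G}_E(G))\geq|D|$. For the upper bound I would show that deleting $D$ already disconnects $\mathcal{G}_E(G)$. Removing $D$ leaves the blow-up of $\mathcal{G}_E^{**}(P_k)$ by cliques of size $m$, and replacing vertices by cliques preserves the number of connected components, so it suffices to prove that $\mathcal{G}_E^{**}(P_k)$ is disconnected; this reduces everything to a statement purely about the $p$-group $P_k$.

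The main work, and the step I expect to be the real obstacle, is proving that $\mathcal{G}_E^{**}(P_k)$ is disconnected. When $P_k$ is not generalized quaternion I would use the invariant sending a non-identity element $y$ to the unique subgroup $\Omega(y)$ of order $p$ contained in the cyclic group $\langle y\rangle$: if $y,y'$ lie in a common cyclic subgroup then $\Omega(y)=\Omega(y')$, so $\Omega$ is constant on each connected component of $\mathcal{G}_E^{*}(P_k)=\mathcal{G}_E^{**}(P_k)$; since $P_k$, being non-cyclic and not generalized quaternion, has at least two subgroups of order $p$, the graph has at least two components. When $P_k$ is generalized quaternion this invariant degenerates, because there is a unique subgroup of order $2$, so I would argue directly: after deleting $e$ and the unique involution $z$, distinct maximal cyclic subgroups of $P_k$ meet only in $\{e,z\}$, so $\mathcal{G}_E^{**}(P_k)$ becomes a disjoint union of cliques, one for each maximal cyclic subgroup, and there are at least two of these. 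In both cases $\mathcal{G}_E^{**}(P_k)$ is disconnected, which completes the upper bound and yields $\kappa(\mathcal{G}_E(G))=|D|$, equal to $m$ in case (1) and $2m$ in case (2), as claimed.
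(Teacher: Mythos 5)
Your proposal is correct, and it reaches the stated connectivity values by a genuinely different route from the paper. The paper does not prove this statement directly: it obtains it as an immediate specialization of \cref{Thm:VC Enh Nil with Cyclic only} and \cref{thm:main-nilp-with-cyc-gen-quat} (take $G_1=P_k$, so $r=1$ and $\tau=1$, giving $\kappa=n$, respectively $\mu=2$ in the quaternion case, giving $\kappa=2n$), and those general theorems are proved via Spacapan's classification of separating sets in strong products (\cref{thm:spacapan-strong-prod}) together with the component-size lemmas (\cref{lem:p-grp-roots}, \cref{dom of gen Q_2^n in enhced}). You instead exploit the special feature of this case -- all but one Sylow factor is cyclic, hence contributes a complete graph -- to see $\mathcal{G}_E(G)$ as a clique blow-up of $\mathcal{G}_E(P_k)$ with cliques of size $m=\prod_{i\neq k}|P_i|$, and then prove the clean identity $\kappa(\mathcal{G}_E(G))=|D|$, where $D$ is the set of dominating vertices: every cut must contain $D$ (a surviving dominating vertex keeps the graph connected), and $D$ itself is a cut because $\mathcal{G}_E^{**}(P_k)$ is disconnected, which you establish via the invariant $\Omega(y)=$ the order-$p$ subgroup of $\langle y\rangle$ (this is in substance the paper's \cref{lema: p and p^i orderd path connd abelong< b>}) and, for $Q_{2^k}$, via the fact that distinct maximal cyclic subgroups meet in $\{e,z\}$; the identification of $D$ itself rests correctly on the classical theorem that a $p$-group with a unique subgroup of order $p$ is cyclic or generalized quaternion (note $P_k$ is automatically non-cyclic here since $G$ is non-cyclic, which you use tacitly). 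What each approach buys: yours is self-contained and elementary, avoiding Spacapan's theorem entirely; the paper's machinery is heavier but necessary for its actual goal, since your key identity $\kappa=|D|$ breaks down as soon as two Sylow factors are non-cyclic -- there the only dominating vertex is the identity while $\mathcal{G}_E^{*}(G)$ stays connected (elements of coprime order are adjacent), so minimum cuts are the larger L-sets encoded by $\tau$ and $\mu$, and the blow-up/dominating-vertex argument cannot recover them.
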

As we can see, their result is for those nilpotent groups, which have all Sylow $p$-subgroups cyclic, except possibly one. That is, they have given the exact vertex connectivity for only those nilpotent groups which have the form $P\times \mathbb{Z}_n $ where $P$ is a non-trivial $p$-group and $\gcd(n,p)=1$. In this paper, we give the vertex connectivity of the enhanced power graph of any nilpotent group, in terms of the number of minimum {\it roots}, which we define next.

For any group $G$ and an element $a \in G$, let $\mathrm{Roots}(a)$ denote the set of elements $g \in G$ such that $a \in \langle g \rangle.$  
For a $p$-group $P$, define 
\begin{equation}\label{Equn: defin of root}
 \mr(P)=\min_ { a \in P \mbox{ and } o(a)=p} |\mathrm{Roots} (a)|.     
\end{equation}
For example, the group $P=\mathbb{Z}_3 \times \mathbb{Z}_9$ has 
$\mr(P)=2$ and  the dihedral group in $8$ elements, which we denote by $D_8$, has $\mr(D_8)=1$.

Now for the group $G_1=P_1\times \cdots\times P_r,$ where each $P_i$ is neither cyclic nor generalized quaternion, we define 
\begin{equation}\label{equa: tau for nilpo grp neither cyclic nor Q}
\tau=\min_{T \subseteq [r]} \left\{ \prod_{i \in T} \big| P_i \big| \left(\prod _{i \in [r]\setminus T} (\mr(P_i)+1) - \prod_{i \in [r]\setminus T}\mr(P_i) \right) \right\}. 
\end{equation}
When $r=1$, it is clear that $\tau=1$. 
With the above definition, we can now state our main result for the vertex connectivity of the enhanced power graph of a finite nilpotent group which does not have any Sylow subgroups that are generalized quaternion. 
\begin{theorem}\label{Thm:VC Enh Nil with Cyclic only}
Let $G$ be a non-cyclic finite nilpotent group that does not have any Sylow subgroups which are generalized quaternions. Then 
\begin{equation*}
\kappa(\mathcal{G}_E(G))=
\begin{cases}
\tau,  & \text{ if } G=G_1, \text{ where } G_1 \text{ is described as above}, \\  
 n \tau, & \text{ if } G=G_1\times \mathbb{Z}_n, \text{ and } \gcd(|G_1|, n)=1,
\end{cases}  
\end{equation*}
where $\tau$ is taken from \eqref{equa: tau for nilpo grp neither cyclic nor Q}.
\end{theorem}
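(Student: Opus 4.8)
The plan is to first pin down the adjacency combinatorics, then dispose of the factor $\mathbb{Z}_n$ by a product reduction, and finally reduce everything to the single identity $\kappa(\mathcal{G}_E(G_1))=\tau$, which I would prove by matching an explicit family of cuts against a structural analysis of an arbitrary minimum cut. Since $G=P_1\times\cdots\times P_r$ is a direct product of Sylow subgroups of pairwise coprime orders, every subgroup splits as a product over the factors, so $\langle x,y\rangle$ is cyclic iff $\langle x_i,y_i\rangle$ is cyclic for each $i$; thus $x\sim y$ in $\mathcal{G}_E(G)$ iff this holds coordinatewise. From this one checks that the only dominating vertex of $\mathcal{G}_E(G_1)$ is the identity (each $P_i$, being neither cyclic nor generalized quaternion, has identity as its only dominating vertex), while the dominating vertices of $\mathcal{G}_E(G_1\times\mathbb{Z}_n)$ are exactly $\{1\}\times\mathbb{Z}_n$. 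Moreover the $\mathbb{Z}_n$-coordinate never obstructs adjacency, so $\mathcal{G}_E(G_1\times\mathbb{Z}_n)$ is isomorphic to the lexicographic product $\mathcal{G}_E(G_1)[K_n]$; as $\mathcal{G}_E(G_1)$ is not complete (since $G_1$ is non-cyclic), the standard formula $\kappa(H[K_n])=n\,\kappa(H)$ for non-complete $H$ gives $\kappa(\mathcal{G}_E(G_1\times\mathbb{Z}_n))=n\,\kappa(\mathcal{G}_E(G_1))$. Hence it suffices to prove $\kappa(\mathcal{G}_E(G_1))=\tau$.

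For the upper bound I would isolate one reusable lemma. Fix $a$ of squarefree order with support $U=\mathrm{supp}(a)$, so $a_i$ has order $p_i$ for $i\in U$; write $\rho_i=|\mathrm{Roots}(a_i)|\ge\mr(P_i)$. Because $a_i=1$ imposes no condition in coordinates $i\notin U$, one gets $N[a]=\prod_{i\in U}(\mathrm{Roots}(a_i)\cup\{1\})\times\prod_{i\notin U}P_i$ and $\mathrm{Roots}(a)=\prod_{i\in U}\mathrm{Roots}(a_i)\times\prod_{i\notin U}P_i$, and, crucially, there is no edge between $\mathrm{Roots}(a)$ and $G_1\setminus N[a]$: if some $j\in U$ has $y_j\notin\mathrm{Roots}(a_j)\cup\{1\}$ while $g_j\in\mathrm{Roots}(a_j)$, then $\langle g_j,y_j\rangle$ cannot be cyclic because $a_j$ has prime order. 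Consequently $N[a]\setminus\mathrm{Roots}(a)$ is a vertex cut (both sides nonempty once $U\neq\emptyset$ and each relevant $P_i$ is non-cyclic) of size $|P_{[r]\setminus U}|\big(\prod_{i\in U}(\rho_i+1)-\prod_{i\in U}\rho_i\big)$. Choosing each $a_i$ to attain $\mr(P_i)$ and letting $U=[r]\setminus T$ range over all nonempty supports realizes exactly the $T$-term of \eqref{equa: tau for nilpo grp neither cyclic nor Q}, so $\kappa(\mathcal{G}_E(G_1))\le\tau$.

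For the reverse inequality I would take a minimum cut $S$ (which necessarily contains the identity) and a component $C$ of $\mathcal{G}_E(G_1)-S$. The aim is to produce a squarefree $a\in C$, with support $U$, for which $\mathrm{Roots}(a)\subseteq C$ and $N[a]\setminus\mathrm{Roots}(a)\subseteq S$. One starts from an element of $C$ of least order and passes to its socle to reach squarefree order; the no-edge property then confines $\mathrm{Roots}(a)$ to $C$ and forces every boundary element of $N[a]$ into the separator $S$. Writing $\rho_i=|\mathrm{Roots}(a_i)|$, this yields $|S|\ge|P_{[r]\setminus U}|\big(\prod_{i\in U}(\rho_i+1)-\prod_{i\in U}\rho_i\big)$. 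Since this expression is nondecreasing in each $\rho_i$ and $\rho_i\ge\mr(P_i)$, it is at least the $T$-term for $T=[r]\setminus U$, hence at least $\tau$. Combined with the previous paragraph this gives $\kappa(\mathcal{G}_E(G_1))=\tau$, and with the first paragraph $\kappa(\mathcal{G}_E(G_1\times\mathbb{Z}_n))=n\tau$.

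I expect the lower bound to be the main obstacle, and within it the two claims $\mathrm{Roots}(a)\subseteq C$ and $N[a]\setminus\mathrm{Roots}(a)\subseteq S$ for the correct squarefree representative $a$. The socle reduction can a priori land in $S$ rather than in $C$, so one must argue that some component contains a squarefree element and that it can be chosen so that none of its roots or boundary vertices leak out of $\{S\}\cup C$; a stray root lying in $C$, or a boundary vertex lying in $C$ instead of $S$, would spoil the count. Ruling these out should rely on the fact that the maximal cyclic subgroups through $a$ overlap only along $N[a]$, and may require an induction on $r$ together with a separate treatment of any component containing no squarefree element. By contrast the remaining ingredients—the coordinatewise adjacency criterion, the determination of the dominating vertices, and the lexicographic-product identity $\kappa(H[K_n])=n\,\kappa(H)$—are routine and would be verified first.
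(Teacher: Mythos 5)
Your upper-bound construction and your $\mathbb{Z}_n$-reduction are both correct and essentially coincide with the paper's: your cut $N[a]\setminus\mathrm{Roots}(a)$ for a squarefree $a$ is exactly the paper's separating set $S_{T^c}$ in \eqref{Eqn: a separating set S_T} (by \cref{lem:p-grp-roots}, a minimum component of $\mathcal{G}_E^{*}(P_i)$ is precisely $\mathrm{Roots}(a_i)$ for an $a_i$ of order $p_i$ attaining $\mr(P_i)$), and since $H\boxtimes K_n\cong H[K_n]$ your lexicographic step is the paper's $\mathcal{G}_E(G)=K_n\boxtimes \mathcal{G}_E(G_1)$ step. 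The genuine gap is the lower bound, and you have flagged it yourself without closing it. The paper does not argue directly at all: it writes $\mathcal{G}_E(G_1)=\mathcal{G}_E(P_1)\boxtimes\cdots\boxtimes\mathcal{G}_E(P_r)$ via \cref{prop:enhanced-power-strong-prod} and then invokes Spacapan's structure theorem (\cref{thm:spacapan-strong-prod}), which says that a separating set of a strong product of connected graphs has the form \eqref{eqn:spac}; once a minimum cut is known to be $\left(\prod_{i\in F}(S_i\cup B_i)-\prod_{i\in F}B_i\right)\prod_{i\notin F}P_i$, the bound $|S|\geq\tau$ follows in two lines from $|S_i|\geq 1$, $|B_i|\geq\mr(P_i)$ (\cref{lem:p-grp-roots}) and the monotonicity inequality \cref{lem:ineq1}. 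Your plan replaces this imported theorem with a from-scratch analysis of an arbitrary minimum cut, and the two claims carrying all the weight ($\mathrm{Roots}(a)\subseteq C$ and $N[a]\setminus\mathrm{Roots}(a)\subseteq S$ for a well-chosen squarefree $a$) are left as acknowledged obstacles, ``may require an induction on $r$.'' That is not a proof of the hard half of the theorem; as written, the proposal establishes only $\kappa(\mathcal{G}_E(G_1))\leq\tau$.

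Worse, the claim as you state it --- for a minimum cut $S$ and \emph{a} component $C$ of $\mathcal{G}_E(G_1)-S$ --- is false, so the plan needs correction and not merely completion. Take $G_1=P_1\times P_2$ with $P_1=P_2=\mathbb{Z}_3\times\mathbb{Z}_3$; here $\tau=3$, attained by $T=\emptyset$, i.e.\ by $S=\left(\prod_{i=1,2}(A_i\cup\{e_i\})\right)\setminus\left(A_1\times A_2\right)$ with $A_i=\mathrm{Roots}(a_i)$. Since each $P_i$, being neither cyclic nor generalized quaternion, has more than one subgroup of order $p_i$, pick $b_i$ of order $3$ with $\mathrm{Roots}(b_i)\cap(A_i\cup\{e_i\})=\emptyset$, and let $C$ be the component of $b=(b_1,b_2)$ (the large one). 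Then $y=(b_1,e_2)\in N[b]\setminus\mathrm{Roots}(b)$, yet $y\notin S$ and $y\sim b$, so $y\in C$: the boundary of $b$ leaks into $C$ exactly in the way you feared, and the same happens for every squarefree element of $C$. So the correct statement is only that \emph{some} component admits such an $a$, and identifying that component for an arbitrary (not a priori structured) minimum cut is precisely the content of Spacapan's theorem --- it is where a direct induction on $r$, plus your separate treatment of components with no squarefree element, would have to do real work. To complete your argument you should either cite \cref{thm:spacapan-strong-prod} as the paper does, or actually supply that induction.
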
 
Now we consider the finite nilpotent group $G$ such that $G$ has a generalized quaternion Sylow subgroup. In this case, we can consider $G=G_1\times Q_{2^k}=P_1\times \cdots\times  P_r\times Q_{2^k},$ where $\gcd(|P_i|,2)=1.$ For the next definition, we write $Q_{2^k}$ as $P_{r+1}$. Define
\begin{equation}\label{Equa: mu for nilpo grp having Q}
\mu=\min_{T \subseteq [r+1]} \left\{ \prod_{i \in T} \big| P_i \big| \left(\prod _{i \in [r+1]\setminus T} (\mr(P_i)+x) - \prod_{i \in [r+1]\setminus T}\mr(P_i) \right) \right\},
\end{equation}
\text{ where } $x=1$ \text{ if } $i \neq r+1$ \text{ and } $x=2, \mr(P_i)=2$ \text{ if } $i=r+1.$
\begin{theorem}
\label{thm:main-nilp-with-cyc-gen-quat}
Let $G$ be a finite nilpotent group having a Sylow subgroup which is generalized quaternion. Then 
\begin{equation*}
\kappa(\mathcal{G}_E(G))=
\begin{cases}
\mu,  & \text{ if } G=G_1\times Q_{2^k} \text{ where } G_1 \text{ is described as above and } \gcd(|G_1|, 2)=1 \\  
 n \mu, & \text{ if } G=G_1\times \mathbb{Z}_n\times Q_{2^k} \text{ and } \gcd(|G_1|, n)=\gcd(|G_1|, 2)=\gcd(n, 2)=1,
\end{cases}  
\end{equation*}
where $\mu$ is taken from \eqref{Equa: mu for nilpo grp having Q}.
\end{theorem}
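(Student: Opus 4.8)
The plan is to reduce the statement to the case $n=1$, i.e. $G=G_1\times Q_{2^k}$, and then prove $\kappa(\mathcal{G}_E(G))\le\mu$ and $\kappa(\mathcal{G}_E(G))\ge\mu$ separately. Write $G=P_1\times\cdots\times P_{r+1}$ with $P_{r+1}=Q_{2^k}$. The basic structural fact I would record first is that, since the $|P_i|$ are pairwise coprime, every cyclic subgroup of $G$ is a direct product of cyclic subgroups of the factors; hence $x=(x_1,\dots,x_{r+1})$ and $y=(y_1,\dots,y_{r+1})$ are adjacent in $\mathcal{G}_E(G)$ if and only if, in each coordinate $i$, the elements $x_i,y_i$ lie in a common cyclic subgroup of $P_i$. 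In particular $\mathbb{Z}_n$, being cyclic and coprime to $|G_1\times Q_{2^k}|$, has complete enhanced power graph and never obstructs adjacency, so $\mathcal{G}_E(G_1\times\mathbb{Z}_n\times Q_{2^k})$ is the lexicographic product $\mathcal{G}_E(G_1\times Q_{2^k})[K_n]$ obtained by blowing up each vertex into a clique of size $n$. I would then prove the identity $\kappa(\Gamma[K_n])=n\,\kappa(\Gamma)$ for every non-complete $\Gamma$ (the upper bound blows up a minimum cut of $\Gamma$; for the lower bound one checks that $\Gamma[K_n]-S$ is disconnected only when the set of fully deleted vertices already disconnects $\Gamma$). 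As $G_1\times Q_{2^k}$ is non-cyclic, $\mathcal{G}_E(G_1\times Q_{2^k})$ is non-complete, so everything reduces to proving $\kappa(\mathcal{G}_E(G_1\times Q_{2^k}))=\mu$.

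For the upper bound, given a minimizing $T\subsetneq[r+1]$ in \eqref{Equa: mu for nilpo grp having Q}, I would exhibit a cut of size equal to the $T$-th term. For each $i\notin T$ fix a local boundary: for $i\le r$ choose an order-$p_i$ element $a_i$ with $|\mathrm{Roots}(a_i)|=\mr(P_i)$ and set $R_i=\mathrm{Roots}(a_i)$, $D_i=\{e_i\}$; for the quaternion coordinate fix a cyclic subgroup $\langle c\rangle$ of order $4$ and set $R_{r+1}=\langle c\rangle\setminus\{e,z\}$ (of size $2=\mr(P_{r+1})$, with $z$ the unique involution) and $D_{r+1}=\{e,z\}$ (of size $2$). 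Define
\[
S_T=\Big(\prod_{i\in T}P_i\Big)\times\Big\{(g_i)_{i\notin T}:g_i\in R_i\cup D_i\ \forall i,\ g_j\in D_j\text{ for some }j\notin T\Big\},
\]
whose cardinality is exactly the $T$-th term by inclusion--exclusion, since $|R_i\cup D_i|=\mr(P_i)+x$ and $|R_i|=\mr(P_i)$. The point is that $S_T$ separates $A=\big(\prod_{i\in T}P_i\big)\times\prod_{i\notin T}R_i$ from the rest: if $w\in A$ and $h$ is adjacent to $w$, then for each $i\notin T$ we have $a_i\in\langle w_i\rangle$ and $h_i$ lies in a cyclic subgroup containing $w_i$, so $\langle h_i\rangle$ and $\langle a_i\rangle$ both sit inside one cyclic $p_i$-group; because a cyclic $p$-group has a unique minimal subgroup, either $h_i=e_i$ or $a_i\in\langle h_i\rangle$, i.e. $h_i\in R_i\cup D_i$ (in the quaternion coordinate the order-$4$ subgroups are maximal cyclic, giving the same conclusion). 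Thus every neighbour of $A$ lies in $A\cup S_T$, so $S_T$ is a vertex cut; minimizing over $T$ yields $\kappa(\mathcal{G}_E(G_1\times Q_{2^k}))\le\mu$. The ordinary factors here run exactly parallel to the analysis behind \Cref{Thm:VC Enh Nil with Cyclic only}.

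For the lower bound I would take a minimum cut $S$ with components $A,B$ of $\mathcal{G}_E(G_1\times Q_{2^k})-S$ and show $|S|\ge\mu$. First I would argue that $A$ contains a representative $v$ each of whose coordinates is $e_i$ or of minimal nontrivial order type (an order-$p_i$ element realizing $\mr(P_i)$ for $i\le r$, an element of an order-$4$ subgroup for the quaternion coordinate), the reduction being driven by the fact that powers of any vertex of $A$ lie in its closed neighbourhood $\subseteq A\cup S$. Setting $T=\{i:v_i=e_i\}\subsetneq[r+1]$ and $a_i:=v_i$ for $i\notin T$, the crucial claim is that every boundary vertex $w$ with $w_i\in R_i\cup D_i$ for all $i\notin T$, $w_i$ arbitrary for $i\in T$, and $w_j\in D_j$ for at least one $j\notin T$, must lie in $S$: such a $w$ is adjacent to $v\in A$, yet because coordinate $j$ carries a dominating element of $P_j$ it is also adjacent to a vertex on the $B$-side, so it cannot lie in $A$. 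Counting these boundary vertices by inclusion--exclusion reproduces precisely the $T$-th term of \eqref{Equa: mu for nilpo grp having Q}, giving $|S|\ge\mu$ and completing the proof.

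The hard part is the lower bound, and within it the step claiming that each boundary vertex $w$ is genuinely adjacent to a vertex of $B$ and hence forced into $S$: for an arbitrary minimum cut one must actually produce the required neighbour on the $B$-side and rule out that it too already lies in $S$, so this demands a careful extremal/global argument — equivalently, a direct proof that $\mathcal{G}_E(G_1\times Q_{2^k})-S$ stays connected whenever $|S|<\mu$, obtained by routing every surviving vertex to a surviving generator of a maximal cyclic subgroup and then linking these generators through their common (dominating-coordinate) neighbours. A secondary but pervasive difficulty is the bookkeeping for the generalized quaternion factor: its two dominating vertices $e,z$ and the fact that its relevant boundary comes from order-$4$ maximal cyclic subgroups are exactly what force the nonstandard values $\mr(P_{r+1})=2$ and $x=2$ in \eqref{Equa: mu for nilpo grp having Q}, and these must be tracked separately in both bounds.
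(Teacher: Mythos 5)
Your reduction of the $\mathbb{Z}_n$ factor and your upper bound are correct and essentially coincide with the paper's: your lexicographic blowup $\Gamma[K_n]$ is literally the paper's strong product $\Gamma\boxtimes K_n$ (by \cref{lem:complete} and \cref{prop:enhanced-power-strong-prod}), and your sets $S_T$ are exactly the I-sets/L-sets of \eqref{eqn:spac} built from $S_i=\{e_i\}$ (resp.\ $\{e,z\}$ for $Q_{2^k}$, per \cref{dom of gen Q_2^n in enhced}) and minimum components $R_i$; your direct verification that $S_T$ separates, via the unique minimal subgroup of a cyclic $p$-group and the maximality of the order-$4$ cyclic subgroups $\langle x^ay\rangle$ of $Q_{2^k}$, is sound (you should say explicitly that $\langle c\rangle$ is chosen of the form $\langle x^ay\rangle$, since for $k\geq 4$ the order-$4$ subgroup inside $\langle x\rangle$ is not maximal cyclic).

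The genuine gap is the lower bound, and you have in effect conceded it. The paper does not argue directly with an arbitrary minimum cut: it invokes Spacapan's theorem (\cref{thm:spacapan-strong-prod}), a nontrivial result from \cite{spacapan} asserting that every minimum separating set of a strong product of connected graphs is an I-set or an L-set, i.e.\ has precisely the shape \eqref{eqn:spac}; it then bounds the size of any such set from below using $|S_i|\geq 1$ (resp.\ $|S_i|\geq 2$ for the quaternion factor), $|B_i|\geq \mr(P_i)$ from \cref{lem:p-grp-roots} (resp.\ minimum component $2$ in $\mathcal{G}_E^{**}(Q_{2^k})$), and the monotonicity inequality \cref{lem:ineq1}. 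Your substitute argument has two unproven steps. First, the claim that the component $A$ contains a vertex $v$ whose nontrivial coordinates are of ``minimal root type'' does not follow from the observation that powers of a vertex of $A$ lie in its closed neighbourhood: those powers may land in $S$ rather than $A$, and even if the cut has L-shape, its components $B_i$ need not be minimum-size components of $\mathcal{G}_E^{*}(P_i)$ --- this is precisely why the paper needs \cref{lem:ineq1} (in fact a version with offset $2$ in the quaternion coordinate, i.e.\ $\prod(a_i+d_i)-\prod a_i\geq\prod(b_i+d_i)-\prod b_i$ for $d_i\in\{1,2\}$, which your outline never addresses). Second, your ``crucial claim'' that every boundary vertex $w$ carrying a dominating coordinate is adjacent to a vertex of $B$ surviving outside $S$ is exactly the global statement that rules out cuts of any other shape; it is not established by the local adjacency analysis, and proving it amounts to reproving Spacapan's theorem for this family of graphs. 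As written, the proposal establishes $\kappa(\mathcal{G}_E(G))\leq\mu$ (and the factor $n$) but not $\kappa(\mathcal{G}_E(G))\geq\mu$; citing \cref{thm:spacapan-strong-prod} and adapting \cref{lem:ineq1} as above would close the gap along the paper's lines.
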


From \cref{thm:main-nilp-with-cyc-gen-quat}, it is clear that when $G= \mathbb{Z}_n \times Q_{2^k}$, $\kappa(\mathcal{G}_E(G))=2n$. Moreover, when $G= P_1 \times \mathbb{Z}_n $, we 
set $G_1=P_1$ in \cref{Thm:VC Enh Nil with Cyclic only} to obtain  $\kappa(\mathcal{G}_E(G))=n$.
Thus, \cref{kumar-ma-parveen} is an immediate corollary of \cref{Thm:VC Enh Nil with Cyclic only} and \cref{thm:main-nilp-with-cyc-gen-quat}.



Bera and Dey in \cite{bera-dey-jgt} proved the following upper bound for the enhanced power graph of any finite abelian group.

\begin{theorem}\cite[Theorem 4.8]{bera-dey-jgt} 
\label{improved vc of enhced pwr raph for grnrral abelian grp}	
	Let 
	\begin{equation*} 
 \label{eqn:G_form} 
	G=\mathbb{Z}_{p^{t_{11}}_1}\times\cdots\times\mathbb{Z}_{p^{t_{1k_1}}_1}\times\cdots\times
\mathbb{Z}_{p^{t_{r1}}_r}\times\cdots\times\mathbb{Z}_{p^{t_{rk_r}}_r}\times \mathbb{Z}_n 
	\end{equation*} 
	where $k_i \geq 2$, $\gcd(n, p_i)=1$, and $1\leq t_{i1}\leq t_{i2}\leq\cdots\leq t_{ik_i} $, for all $i\in [r]. $
	We then have, 
\[ \kappa(\mathcal{G}_E(G))\leq n \left(p_1^{t_{11}}p_2^{t_{21}}\cdots p_r^{t_{r1}}-\phi(p_1^{t_{11}}p_2^{t_{21}}\cdots p_r^{t_{r1}})\right).\] 
\end{theorem}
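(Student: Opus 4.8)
The plan is to prove this upper bound by exhibiting an explicit vertex cut of $\mathcal{G}_E(G)$ whose cardinality is exactly $n\left(m - \phi(m)\right)$, where I abbreviate $m = p_1^{t_{11}}p_2^{t_{21}}\cdots p_r^{t_{r1}}$. Since the Sylow $p_i$-subgroup is $P_i = \mathbb{Z}_{p_i^{t_{i1}}}\times\cdots\times\mathbb{Z}_{p_i^{t_{ik_i}}}$, I single out the ``minimal'' cyclic subgroup $C = \mathbb{Z}_{p_1^{t_{11}}}\times\cdots\times\mathbb{Z}_{p_r^{t_{r1}}}\times\mathbb{Z}_n$ built from the first (smallest-exponent) cyclic factor of each prime together with the $\mathbb{Z}_n$ part; by coprimality $C \cong \mathbb{Z}_{mn}$ is cyclic. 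I then set
\[
S = \{\, v \in C : \text{the projection of $v$ onto $\mathbb{Z}_m$ is not a generator of $\mathbb{Z}_m$} \,\},
\]
where $\mathbb{Z}_m = \mathbb{Z}_{p_1^{t_{11}}}\times\cdots\times\mathbb{Z}_{p_r^{t_{r1}}}$. Since $\mathbb{Z}_m$ has $\phi(m)$ generators and the $\mathbb{Z}_n$-coordinate is free, a direct count gives $|S| = n\left(m - \phi(m)\right)$, the target value.

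Next I would show that deleting $S$ disconnects the graph, by verifying that $G\setminus S$ splits as the disjoint union $(C\setminus S)\sqcup(G\setminus C)$ with no edges between the two blocks. Both blocks are nonempty: $C\setminus S$ contains the $\phi(m)\,n$ elements whose $\mathbb{Z}_m$-projection is a generator, and $G\setminus C \neq \emptyset$ because each $k_i \geq 2$ forces $|C| = mn < |G|$. Note also that the identity lies in $S$ (its $\mathbb{Z}_m$-projection is $0$, a non-generator since $m>1$), so the dominating identity vertex is indeed removed.

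The crux --- and the step I expect to be the main obstacle --- is the no-edge claim: if $v \in C\setminus S$ and $w \in G\setminus C$, then $\langle v, w\rangle$ is not cyclic, so $v\not\sim w$ in $\mathcal{G}_E(G)$. Because $w \notin C$, some Sylow $p_j$-component $w_j$ of $w$ has a nonzero entry in a factor $\mathbb{Z}_{p_j^{t_{j\ell}}}$ with $\ell \geq 2$, while the corresponding component $v_j$ of $v$ is a generator of the first factor $\mathbb{Z}_{p_j^{t_{j1}}}$ and is zero elsewhere. Projecting $P_j$ onto the coordinates $1$ and $\ell$ and subtracting a suitable multiple of the image of $v_j$ to clear the first coordinate, I obtain two independent nontrivial elements of a rank-two abelian $p_j$-group inside the image of $\langle v_j, w_j\rangle$; hence this image, and therefore $\langle v, w\rangle$ itself, fails to be cyclic. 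Since adjacency in the enhanced power graph is exactly the cyclicity of $\langle v,w\rangle$, this shows there is no path from $C\setminus S$ to $G\setminus C$ in $G\setminus S$, so $S$ is a vertex cut and $\kappa(\mathcal{G}_E(G)) \le |S| = n(m-\phi(m))$.
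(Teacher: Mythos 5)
Your argument is correct and complete. The count $|S|=n\bigl(m-\phi(m)\bigr)$ is right, and the crucial no-edge claim goes through exactly as you sketch: since $v\in C\setminus S$ forces each $v_{j1}$ to be a unit, projecting $P_j$ onto coordinates $1$ and $\ell$ and replacing the image of $w_j$ by $w_j-cv_j$ produces $(u,0)$ and $(0,w_{j\ell})$, whose span contains a copy of $\mathbb{Z}_{p_j}\times\mathbb{Z}_{p_j}$; as homomorphic images of cyclic groups are cyclic, $\langle v,w\rangle$ cannot be cyclic, so $G\setminus S=(C\setminus S)\sqcup(G\setminus C)$ has no crossing edges, and both blocks are nonempty because $\phi(m)\geq 1$ and $k_i\geq 2$. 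One thing worth knowing: this paper never proves the statement at all — it imports it from \cite{bera-dey-jgt} — and instead establishes the sharper exact formula (\cref{thm:vc-abelian-exact}) via the factorization $\mathcal{G}_E(G)=\mathcal{G}_E(P_1)\boxtimes\cdots\boxtimes\mathcal{G}_E(P_r)\boxtimes K_n$ (\cref{prop:enhanced-power-strong-prod}, \cref{lem:complete}) together with Spacapan's classification of separating sets in strong products (\cref{thm:spacapan-strong-prod}). Your cut is in fact an instance of the paper's L-sets \eqref{eqn:spac}: inside each $\mathcal{G}_E(P_i)$, take $S_i$ to be the non-generators of the embedded first factor $\mathbb{Z}_{p_i^{t_{i1}}}\times\{0\}\times\cdots\times\{0\}$ and $A_i$ its $\phi\bigl(p_i^{t_{i1}}\bigr)$ generators (a component of $\mathcal{G}_E(P_i)-S_i$, by your same projection argument); then $S=\bigl(\prod_{i\in[r]}(S_i\cup A_i)-\prod_{i\in[r]}A_i\bigr)\times\mathbb{Z}_n$, which has size $n(m-\phi(m))$ since $\phi$ is multiplicative. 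The paper's machinery makes sharper choices — $S_i=\{e_i\}$ and $A_i$ a minimum component of $\mathcal{G}_E^{*}(P_i)$ of size $\mr(P_i)$ (\cref{lem:p-grp-roots}, \cref{Lemma:min-pos-size-abel}), then minimizing over subsets $T$ as in \cref{Prop: VC enh Nilpo no cyclic no Q} — and the resulting exact connectivity can be strictly smaller than $n(m-\phi(m))$; but the statement only claims an upper bound, which you have. What your route buys is a short, self-contained, purely group-theoretic proof of the cited bound that needs no strong-product or Spacapan machinery; what it gives up is tightness.
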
 

In the following result,  we explicitly determine the vertex connectivity of the enhanced power graph for any finite abelian group. In particular, we prove the following: 

\begin{theorem}\label{thm:vc-abelian-exact}
Let 
\begin{equation*}
G=\mathbb{Z}_{p^{t_{11}}_1}\times\cdots\times\mathbb{Z}_{p^{t_{1k_1}}_1}\times \cdots\times
\mathbb{Z}_{p^{t_{r1}}_r}\times\cdots\times\mathbb{Z}_{p^{t_{rk_r}}_r} \times \mathbb{Z}_n
\end{equation*}	
where $k_i\geq 2 , \gcd(n,p_i)=1$, and $1\leq t_{i1}\leq t_{i2}\leq\cdots\leq t_{ik_i} $, for all $i\in [r].$
Then, 
\[ \kappa( \mathcal{G}_E(G)= n \min_{T \subseteq [r]} \left\{ \prod_{i \in T} \left| p_i ^ { \sum_{\ell=1}^{k_i}t_{i\ell} } \right| 
\left(\prod _{i \in [r]\setminus T} 
\left( \frac{p_i^{t_{i1}k_i}-1}{p_i^{k_i}-1} +1 \right) - \prod_{i\in [r] \setminus T} 
\left ( \frac{p_i^{t_{i1}k_i} - 1} {p_i^{k_i}-1}
\right)
\right) \right\}.\] 
\end{theorem}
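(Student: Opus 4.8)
The plan is to deduce the statement directly from \cref{Thm:VC Enh Nil with Cyclic only}, reducing the whole problem to a single quantity: the minimum root number $\mr(P_i)$ of each Sylow subgroup. First I would observe that the group in question is exactly a finite nilpotent group of the second type in \cref{Thm:VC Enh Nil with Cyclic only}. Writing $P_i = \mathbb{Z}_{p_i^{t_{i1}}} \times \cdots \times \mathbb{Z}_{p_i^{t_{ik_i}}}$, each $P_i$ is abelian and non-cyclic (since $k_i \geq 2$), hence neither cyclic nor generalized quaternion, and $G = G_1 \times \mathbb{Z}_n$ with $G_1 = P_1 \times \cdots \times P_r$ and $\gcd(|G_1|, n) = 1$ (allowing $n = 1$). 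Thus \cref{Thm:VC Enh Nil with Cyclic only} gives $\kappa(\mathcal{G}_E(G)) = n\tau$ with $\tau$ as in \eqref{equa: tau for nilpo grp neither cyclic nor Q}. Since $|P_i| = \prod_{\ell=1}^{k_i} p_i^{t_{i\ell}} = p_i^{\sum_\ell t_{i\ell}}$ is immediate, the theorem follows once I evaluate $\mr(P_i)$ for each abelian $p$-group $P_i$ in closed form.

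The heart of the argument is therefore the following computation, which I would isolate as a lemma: for $P = \mathbb{Z}_{p^{t_1}} \times \cdots \times \mathbb{Z}_{p^{t_k}}$ with $k \geq 2$ and $t_1 \leq \cdots \leq t_k$, determine $|\mathrm{Roots}(a)|$ for each element $a$ of order $p$. The key reduction is that, since $\langle g \rangle$ is cyclic and hence has a unique subgroup of order $p$, we have $a \in \langle g \rangle$ if and only if $\langle a \rangle$ is that subgroup, i.e. $mg = a$ for some integer $m$. Reading this condition coordinatewise in each $\mathbb{Z}_{p^{t_\ell}}$ turns it into a system of $p$-adic valuation constraints on the coordinates of $g$, governed by the valuation $v_p$ of the coordinate of $g$ on which $a$ is supported. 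I would then count the admissible $g$ by conditioning on this valuation $j$ and multiplying the number of choices in each coordinate; the crucial phenomenon is that the number of choices in every ``passive'' coordinate collapses to a power of $p$ that is independent of the corresponding exponent $t_\ell$, so only $t_1$ and $k$ survive.

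Carrying out this count shows that $|\mathrm{Roots}(a)|$ depends on $a$ only through its height $h = \max\{s : a \in p^s P\}$, and equals the geometric sum $(p-1)\sum_{j=0}^{h} p^{jk} = (p-1)\frac{p^{(h+1)k}-1}{p^k-1}$; in particular it is strictly increasing in $h$. Since $\mathrm{Aut}(P)$ acts transitively on the order-$p$ elements of each fixed height, $|\mathrm{Roots}(a)|$ is genuinely a function of the height alone, so the minimum over all order-$p$ elements is attained at the smallest possible height. An order-$p$ element supported in a factor $\mathbb{Z}_{p^{t_\ell}}$ has height $t_\ell - 1$, so the minimal height is $t_1 - 1$, realized by an element supported in a smallest-exponent factor; this yields the closed form for $\mr(P)$. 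Substituting this value of $\mr(P_i)$, together with $|P_i| = p_i^{\sum_\ell t_{i\ell}}$, into the definition \eqref{equa: tau for nilpo grp neither cyclic nor Q} of $\tau$ and invoking \cref{Thm:VC Enh Nil with Cyclic only} then gives the asserted value of $\kappa(\mathcal{G}_E(G))$.

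The main obstacle I anticipate is the valuation bookkeeping in the count of $\mathrm{Roots}(a)$: one must check carefully that a single integer $m$ can simultaneously realize the congruence fixing $a$ in its supported coordinate and annihilate $g$ in every other coordinate, and that the per-coordinate counts telescope to the clean geometric series above. Equally important is establishing that $|\mathrm{Roots}(a)|$ is a function of the height alone and is monotonic in it, since this is what makes the passage to the minimum over \emph{all} order-$p$ elements rigorous; the monotonicity is the step that must be argued structurally rather than merely verified for the extremal element.
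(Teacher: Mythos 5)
Your strategy coincides with the paper's: both reduce via \cref{Thm:VC Enh Nil with Cyclic only} (equivalently \cref{Prop: VC enh Nilpo no cyclic no Q}) to the single task of evaluating $\mr(P_i)$ for an abelian $p$-group, and both evaluate it by counting $\mathrm{Roots}(a)$ for order-$p$ elements $a$; the paper's versions of your key lemma are \cref{lem:number-of-roots-p-order-element} (a uniform lower bound) and \cref{Lemma:min-pos-size-abel} (an extremal element). The one genuine flaw in your proposal is the claim that $|\mathrm{Roots}(a)|$ equals $(p-1)\frac{p^{(h+1)k}-1}{p^k-1}$ for \emph{every} order-$p$ element of height $h$. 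The number of solutions of $p^j y=a$ in $P=\mathbb{Z}_{p^{t_1}}\times\cdots\times\mathbb{Z}_{p^{t_k}}$ is $\prod_{\ell=1}^{k}p^{\min(j,t_\ell)}$, which equals $p^{jk}$ only for $j\leq t_1$; the correct count is $(p-1)\sum_{j=0}^{h}\prod_{\ell=1}^{k}p^{\min(j,t_\ell)}$, so your formula overcounts as soon as $h\geq t_1$. Concretely, in $\mathbb{Z}_3\times\mathbb{Z}_{27}$ the element $(0,9)$ has height $2$ and exactly $2(1+3^{2}+3^{1+2})=74$ roots, whereas your formula predicts $2(1+3^{2}+3^{4})=182$. (Relatedly, the transitivity of $\mathrm{Aut}(P)$ on order-$p$ elements of fixed height is true but superfluous here: the direct count visibly depends only on $h$.)

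The error is repairable and does not affect your conclusion, because you only use the formula at the minimum: since $t_1=\min_\ell t_\ell$, every order-$p$ element has height at least $t_1-1$, the corrected count is still strictly increasing in $h$, and at $h=t_1-1$ your geometric sum is exact, giving $\mr(P)=(p-1)\frac{p^{t_1k}-1}{p^k-1}$. This is precisely how the paper sidesteps the issue: \cref{lem:number-of-roots-p-order-element} proves only the lower bound $(p-1)\frac{p^{t_1k}-1}{p^k-1}$, valid for all order-$p$ elements because $p^j$-th roots exist and number $p^{jk}$ for every $j\leq t_1-1$, and \cref{Lemma:min-pos-size-abel} exhibits an element attaining it, so no exact count at larger heights is ever needed. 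One further caveat: what you (and the paper's own lemmas) actually establish is \eqref{equa: tau for nilpo grp neither cyclic nor Q} with $\mr(P_i)=(p_i-1)\frac{p_i^{t_{i1}k_i}-1}{p_i^{k_i}-1}$ substituted, whereas the displayed formula in \cref{thm:vc-abelian-exact} omits the factors $(p_i-1)$ (already for $G=(\mathbb{Z}_3\times\mathbb{Z}_3)\times(\mathbb{Z}_5\times\mathbb{Z}_5)$ the correct value is $7$, not $3$); so your closing assertion that the substitution ``gives the asserted value'' holds only after these factors are restored in the statement.
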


\section{Preliminaries} 
In this section, we will start by reviewing specific findings that have been previously established and are pertinent to our paper. Bera et al. carried out an extensive examination on the fascinating properties of enhanced power graphs of finite groups. Their studies in references \cite{enhancedpwrgrapbb3} and \cite{bera-dey-sajal} successfully demonstrated the outcomes as follows:
\begin{lemma}
\cite[Theorem 2.4]{enhancedpwrgrapbb3}
\label{lem:complete}
The enhanced power graph $\mathcal{G}_E(G)$ of the group $G$ is complete if and only if $G$ is cyclic.
\end{lemma}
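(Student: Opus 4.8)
The plan is to prove the two implications separately, using only the definition of the enhanced power graph together with the finiteness of $G$, and to make the substantive direction self-contained by avoiding the structure theorem.

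For the easy direction, suppose $G$ is cyclic, say $G=\langle g\rangle$. Then for any two distinct vertices $u,v\in G$ we have $u,v\in\langle g\rangle$, so taking $w=g$ in \cref{defn: enhcdpowr graph} witnesses that $u$ and $v$ are adjacent. Hence every pair of vertices is adjacent and $\mathcal{G}_E(G)$ is complete.

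For the converse, assume $\mathcal{G}_E(G)$ is complete and let $x\in G$ be an element of maximal order, which exists since $G$ is finite. I would fix an arbitrary $y\in G$ and exploit the adjacency of $x$ and $y$: by completeness there is $w\in G$ with $x,y\in\langle w\rangle$. Since $\langle w\rangle$ is cyclic and contains $x$, we have $o(x)\mid o(w)$, so $o(x)\le o(w)$; maximality of $o(x)$ then forces $o(w)=o(x)$. Consequently $\langle x\rangle$ is a subgroup of the cyclic group $\langle w\rangle$ of order $o(x)=o(w)=|\langle w\rangle|$, whence $\langle x\rangle=\langle w\rangle$ and in particular $y\in\langle x\rangle$. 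As $y$ was arbitrary, $G=\langle x\rangle$ is cyclic.

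There is essentially no hard step here: the only points requiring care are the existence of a maximal-order element (guaranteed by finiteness) and the observation that a subgroup of a cyclic group whose order matches that of the whole group must equal it. An alternative route for the converse would first note that any common cyclic overgroup is abelian, so completeness forces $G$ to be abelian, and then invoke the primary decomposition: a non-cyclic finite abelian group has a non-cyclic Sylow $p$-subgroup, hence two elements $u,v$ of order $p$ with $\langle u,v\rangle\cong\mathbb{Z}_p\times\mathbb{Z}_p$, which can lie in no common cyclic subgroup, contradicting completeness. I would nevertheless present the maximal-order argument as the main proof, since it is short, elementary, and does not rely on the classification of finite abelian groups.
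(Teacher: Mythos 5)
Your proof is correct, and since the paper imports this lemma verbatim from \cite[Theorem 2.4]{enhancedpwrgrapbb3} without reproducing a proof, there is no internal argument to compare against; your maximal-order argument is the standard proof of this fact. The only pedantic point worth adding is that adjacency in \cref{defn: enhcdpowr graph} is defined for \emph{distinct} vertices, so for $y=x$ one should note $y\in\langle x\rangle$ trivially before invoking completeness; with that caveat, both directions are complete and your proof is fully self-contained.
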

\begin{lemma}
\label{lem;enhanced-coprime} 
\cite[Lemma 2.5]{bera-dey-sajal}
 Let $G$ be a finite group and let $x, y \in G$ be
such that $\gcd\{o(x), o(y)\}= 1$ and $xy = yx$. Then $x \sim y$ in $\mathcal{G}_E(G)$.
\end{lemma}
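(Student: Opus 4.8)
The plan is to exhibit an explicit element $w \in G$ with $x, y \in \langle w \rangle$, which is exactly what the definition of $\mathcal{G}_E(G)$ requires. The natural candidate is $w = xy$. First I would record the two structural consequences of the hypotheses: since $xy = yx$, the subgroup $\langle x, y \rangle$ is abelian, and since $\gcd(o(x), o(y)) = 1$, the cyclic subgroups $\langle x \rangle$ and $\langle y \rangle$ intersect trivially (any common element has order dividing both $o(x)$ and $o(y)$, hence order $1$). Writing $m = o(x)$ and $n = o(y)$, these together give that $\langle x, y\rangle = \langle x\rangle \times \langle y\rangle$ is an internal direct product, and a direct product of cyclic groups of coprime orders is itself cyclic, of order $mn$.

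Next I would show that $w = xy$ in fact generates this product, so that $x, y \in \langle w \rangle$. Because $x$ and $y$ commute and $y^n = e$, we have $w^n = x^n y^n = x^n$. Now $\gcd(m, n) = 1$ forces the map $a \mapsto a^n$ to be a bijection of $\langle x \rangle$, so $x^n$ still has order $m$ and $\langle x^n \rangle = \langle x \rangle$; hence $x \in \langle x^n \rangle = \langle w^n \rangle \subseteq \langle w \rangle$. By the symmetric computation $w^m = x^m y^m = y^m$ together with $\gcd(n, m) = 1$, we obtain $y \in \langle w^m \rangle \subseteq \langle w \rangle$.

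With the single element $w = xy$ witnessing both memberships, the definition of the enhanced power graph immediately yields $x \sim y$, completing the argument. There is no genuine obstacle here; the only point requiring care is the elementary number-theoretic fact that raising to a power coprime to the order is an automorphism of a cyclic group, which is precisely what makes $x^n$ (respectively $y^m$) recover a generator of $\langle x\rangle$ (respectively $\langle y\rangle$) rather than collapsing it. One could alternatively route the whole proof through the Chinese Remainder Theorem, identifying $\langle x\rangle \times \langle y\rangle$ with $\mathbb{Z}_m \times \mathbb{Z}_n \cong \mathbb{Z}_{mn}$ and locating $x$ and $y$ inside the cyclic group directly, but the explicit computation with $w = xy$ is shorter and avoids invoking the isomorphism.
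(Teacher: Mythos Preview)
Your argument is correct: setting $w = xy$ and using that $w^n = x^n$ generates $\langle x\rangle$ (and symmetrically for $y$) is exactly the standard proof of this fact. Note that the paper does not supply its own proof of this lemma; it is quoted from \cite{bera-dey-sajal} and used as a preliminary, so there is nothing to compare your approach against beyond observing that your proof is the expected one.
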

\begin{lemma}\cite[Lemma 2.6]{bera-dey-sajal}
\label{lema: p and p^i orderd path connd abelong< b>}
Let $G$ be a $p$-group. Let $a, b$ be two elements of $G$ of order $p, p^i (i\geq 1)$ respectively. If there is a path between $a$ and $b$ in $\mathcal{G}_E^*(G),$ then $\langle a\rangle \subseteq \langle b\rangle.$ In particular, if both a and b have order p, then, $\langle a \rangle =\langle b \rangle.$
\end{lemma}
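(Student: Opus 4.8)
The plan is to exploit the fact that every nontrivial cyclic $p$-group contains a \emph{unique} subgroup of order $p$, and to turn this into an invariant that stays constant along edges of $\mathcal{G}_E^*(G)$. Since $G$ is a $p$-group, every nonidentity element has order a power of $p$, so for $x \neq e$ the cyclic group $\langle x \rangle$ has order $p^m$ with $m \geq 1$ and possesses exactly one subgroup of order $p$; I would denote this subgroup by $\Omega(x)$. The basic structural observation I would record first is: if $\langle u \rangle \subseteq \langle w \rangle$ with $u \neq e$, then $\Omega(u)$ is an order-$p$ subgroup of the cyclic group $\langle w \rangle$, so by uniqueness $\Omega(u) = \Omega(w)$.

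Next I would show that adjacency preserves $\Omega$. Suppose $u \sim v$ in $\mathcal{G}_E^*(G)$. Then $u, v \neq e$, and by definition of the enhanced power graph there is some $w \in G$ with $u, v \in \langle w \rangle$; note $w \neq e$ since $\langle w \rangle$ contains the nonidentity element $u$. Applying the structural observation twice gives $\Omega(u) = \Omega(w) = \Omega(v)$. Thus $\Omega$ takes the same value at the two endpoints of every edge. Because every vertex of $\mathcal{G}_E^*(G)$ is nonidentity, $\Omega$ is defined at each vertex of any path, so it is constant along the whole path.

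Finally I would apply this to the given path $a = v_0 \sim v_1 \sim \cdots \sim v_k = b$. Constancy of $\Omega$ yields $\Omega(a) = \Omega(b)$. Since $a$ has order $p$ we have $\Omega(a) = \langle a \rangle$, while in general $\Omega(b) \subseteq \langle b \rangle$; combining these gives $\langle a \rangle = \Omega(b) \subseteq \langle b \rangle$, which is the claim. For the ``in particular'' statement, if $b$ also has order $p$ then $\Omega(b) = \langle b \rangle$, so the inclusion collapses to the equality $\langle a \rangle = \langle b \rangle$.

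The only delicate point is the bookkeeping that ensures $\Omega(x)$ is defined at every vertex encountered, and this is precisely why the hypothesis is stated in the deleted graph $\mathcal{G}_E^*(G)$ (identity removed) rather than in $\mathcal{G}_E(G)$: it guarantees that each $v_j \neq e$, so that every $\langle v_j \rangle$ is a nontrivial cyclic $p$-group with a well-defined unique subgroup of order $p$. Beyond securing this, no real obstacle arises; once the uniqueness of the order-$p$ subgroup in a cyclic $p$-group is in hand, the argument is elementary.
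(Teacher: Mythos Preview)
Your argument is correct: the map $\Omega$ assigning to each nonidentity element of a $p$-group the unique order-$p$ subgroup of the cyclic group it generates is well defined on $V(\mathcal{G}_E^*(G))$ and constant along edges, hence along paths, and this immediately gives $\langle a\rangle=\Omega(a)=\Omega(b)\subseteq\langle b\rangle$. Note, however, that the present paper does not supply its own proof of this lemma; it simply quotes the result from \cite[Lemma~2.6]{bera-dey-sajal}, so there is no in-paper argument to compare against.
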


The following lemma follows from \cref{lema: p and p^i orderd path connd abelong< b>}

\begin{lemma}
\label{lem:p-grp-roots}
Let $G$ be a $p$-group which is neither cyclic nor generalized quaternion. Then, the minimum size of a connected component in $\mathcal{G}_E^{*}(G)$ is $\mr(G)$.
\end{lemma}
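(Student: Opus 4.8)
The plan is to show that the connected components of $\mathcal{G}_E^{*}(G)$ are exactly the sets $\mathrm{Roots}(a)$, as $a$ ranges over representatives of the order-$p$ subgroups of $G$; the minimum component size will then be $\mr(G)$ directly from \eqref{Equn: defin of root}. First I would record that every vertex of $\mathcal{G}_E^{*}(G)$, that is, every non-identity element $b$, lies in the component of some order-$p$ element: writing $o(b)=p^i$, the element $a:=b^{p^{i-1}}$ has order $p$ and satisfies $a,b \in \langle b\rangle$, so $a\sim b$. Hence restricting attention to order-$p$ elements captures every component.

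Next, I would fix $a$ with $o(a)=p$ and prove that its component equals $\mathrm{Roots}(a)$. For the inclusion $\mathrm{Roots}(a)\subseteq \text{component of }a$: if $b\in\mathrm{Roots}(a)$ then $a\in\langle b\rangle$, so $a$ and $b$ are both powers of $b$ and hence adjacent, placing $b$ in the component of $a$. For the reverse inclusion I would invoke \cref{lema: p and p^i orderd path connd abelong< b>}: if $b$ (of order $p^i$) lies in the component of $a$, then there is a path from $a$ to $b$ in $\mathcal{G}_E^{*}(G)$, whence $\langle a\rangle\subseteq\langle b\rangle$, i.e. $a\in\langle b\rangle$ and $b\in\mathrm{Roots}(a)$. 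Thus the component of $a$ is precisely $\mathrm{Roots}(a)$; and since $a\neq e$ forces $e\notin\mathrm{Roots}(a)$, this set sits entirely inside the vertex set of $\mathcal{G}_E^{*}(G)$, so no bookkeeping adjustment for the deleted identity is needed.

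To finish I would verify that these components are correctly indexed by the order-$p$ subgroups. The ``in particular'' clause of \cref{lema: p and p^i orderd path connd abelong< b>} shows that two order-$p$ elements in one component generate the same subgroup, while conversely $\langle a\rangle=\langle a'\rangle$ immediately yields $\mathrm{Roots}(a)=\mathrm{Roots}(a')$; hence distinct order-$p$ subgroups give genuinely distinct, disjoint components, and by the first step every component arises in this way. The multiset of component sizes is therefore exactly $\{\,|\mathrm{Roots}(a)| : a\in G,\ o(a)=p\,\}$, whose minimum is $\mr(G)$ by definition.

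I do not expect a serious obstacle here: the statement is essentially a repackaging of \cref{lema: p and p^i orderd path connd abelong< b>}, and the only points requiring care are the two inclusions and the exclusion of the identity, both handled above. It is worth remarking that the hypothesis that $G$ is neither cyclic nor generalized quaternion is what makes the statement substantive, since such a $G$ possesses more than one subgroup of order $p$, so that $\mathcal{G}_E^{*}(G)$ really does split into several components; in the excluded cases the graph is connected and the unique component has size $|G|-1$, which the same argument shows equals $\mr(G)$ as well.
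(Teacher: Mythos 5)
Your proposal is correct and follows essentially the same route as the paper, which gives no written proof but states that the lemma ``follows from \cref{lema: p and p^i orderd path connd abelong< b>}'' --- precisely the lemma you invoke to show each component of $\mathcal{G}_E^{*}(G)$ equals $\mathrm{Roots}(a)$ for an order-$p$ element $a$, so that the minimum component size is $\mr(G)$ by definition. Your write-up merely supplies the details (the two inclusions, the indexing by order-$p$ subgroups, and the exclusion of the identity) that the paper leaves implicit.
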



We recall the definition of the generalized quaternion group. Let $x = \overline{(1, 0)}$ and $y = \overline{(0, 1)}.$ Then $Q_{2^n} = \langle x, y\rangle,$ where
\begin{enumerate}
\item
$x$ has order $2^{n-1}$ and $y$ has order $4,$
\item
every element of $Q_{2^n}$ can be written in the form $x^a$ or $x^ay$ for some $a\in \mathbb{Z},$
\item
$x^{2^{n-2}}=y^2,$
\item
for each $g\in Q^{2^n}$ such that $g\in \langle x \rangle,$ such that $gxg^{-1}=x^{-1}.$
\end{enumerate}
For more information about $Q_{2^n}$ see \cite{generalized-quaternion, algebradummitfoote, scott-group}. 
\begin{lemma}[Theorem 4.2, \cite{bera-dey-sajal}]\label{dom of gen Q_2^n in enhced}
The enhanced power graph of generalized quaternion group $Q_{2^n}$ has vertex connectivity $2$. Moreover, any separating set of $\mathcal{G}_E(Q_{2^n})$ must contain the identity element and also the unique non-identity element of order $2$. 
Moreover, it is easy to show that the minimum size of a connected component in $\mathcal{G}_E^{**}(G)$ is $2$. 
\end{lemma}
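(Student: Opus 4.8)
The plan is to read off the entire adjacency structure of $\mathcal{G}_E(Q_{2^n})$ from its subgroup lattice, and in particular to pin down exactly which vertices are dominating. Write $z = y^2 = x^{2^{n-2}}$ for the unique element of order $2$. First I would record the two structural facts that drive everything: (i) the maximal cyclic subgroup $\langle x\rangle$ has index $2$, and (ii) every element lying outside $\langle x\rangle$ has the form $x^a y$ and satisfies $(x^a y)^2 = x^a(y x^a y^{-1})y^2 = x^a x^{-a} z = z$, so it has order exactly $4$. Consequently any cyclic subgroup $C$ not contained in $\langle x\rangle$ satisfies $[C : C\cap\langle x\rangle] = [Q_{2^n} : \langle x\rangle] = 2$ and is generated by an order-$4$ element outside $\langle x\rangle$; hence $|C| = 4$ and $C\cap\langle x\rangle = \{e,z\}$. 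Thus the maximal cyclic subgroups of $Q_{2^n}$ are $\langle x\rangle$ together with the $2^{n-2}$ subgroups of the form $\langle x^a y\rangle$, any two of which meet only in $\{e,z\}$.

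From this the adjacencies are immediate. Since $\langle x\rangle$ is cyclic, it is a clique; and for each $a$ the pair $\{x^a y, (x^a y)^{-1}\}$ lies in the common cyclic group $\langle x^a y\rangle$, so these two vertices are adjacent. The crucial point is that no further adjacencies exist among the non-identity, non-$z$ vertices: if $g = x^a y$ were adjacent to some $h$ in $\mathcal{G}_E(Q_{2^n})$, a common cyclic overgroup $\langle w\rangle$ would contain $g$, forcing $\langle w\rangle = \langle x^a y\rangle$ by the previous paragraph, and then $h \in \{e, z, x^a y, (x^a y)^{-1}\}$. In particular $g$ is adjacent to nothing in $\langle x\rangle$ except $e$ and $z$, and to nothing in another subgroup $\langle x^b y\rangle$ with $b\neq a$.

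Next I would identify the dominating vertices. Both $e$ and $z$ are dominating: $e$ lies in every cyclic subgroup, while $z\in\langle x\rangle$ and $z = (x^a y)^2 \in \langle x^a y\rangle$ for every $a$, so $z$ is adjacent to all vertices. Conversely no other vertex is dominating, since by the adjacency analysis any $x^a y$ fails to be adjacent to elements of $\langle x^b y\rangle$ for $b\neq a$, and any element of $\langle x\rangle\setminus\{e,z\}$ fails to be adjacent to $x^a y$. Deleting the two dominating vertices therefore yields $\mathcal{G}_E^{**}(Q_{2^n})$, which by the adjacency description is the disjoint union of the clique on $\langle x\rangle\setminus\{e,z\}$ (of size $2^{n-1}-2$) with the $2^{n-2}$ disjoint edges $\{x^a y, (x^a y)^{-1}\}$. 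This graph is disconnected, with smallest component a $K_2$, giving the asserted minimum component size $2$ in $\mathcal{G}_E^{**}$.

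Finally I would assemble the connectivity statement. A dominating vertex is adjacent to all others, so no graph retaining a dominating vertex can be disconnected; hence every separating set of $\mathcal{G}_E(Q_{2^n})$ must contain all dominating vertices, that is, both $e$ and $z$. This forces $\kappa(\mathcal{G}_E(Q_{2^n}))\ge 2$, and since deleting $\{e,z\}$ already disconnects the graph, $\{e,z\}$ is a separating set of size $2$; thus $\kappa(\mathcal{G}_E(Q_{2^n})) = 2$ and every minimum separating set equals $\{e,z\}$. The only delicate step, and the one I would write out most carefully, is the subgroup-lattice claim that every cyclic subgroup meeting the complement of $\langle x\rangle$ has order exactly $4$; once that is in hand, the adjacency structure, the determination of the dominating vertices, and both conclusions follow directly.
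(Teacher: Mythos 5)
Your proposal is correct and complete: the classification of the maximal cyclic subgroups of $Q_{2^n}$ (namely $\langle x\rangle$ together with the $2^{n-2}$ order-$4$ subgroups $\langle x^a y\rangle$, pairwise intersecting in $\{e,z\}$) is established soundly via the index-$2$ argument, and everything else --- the identification of $e$ and $z$ as the only dominating vertices, the decomposition of $\mathcal{G}_E^{**}(Q_{2^n})$ into a clique of size $2^{n-1}-2$ plus $2^{n-2}$ disjoint copies of $K_2$, and the conclusion $\kappa(\mathcal{G}_E(Q_{2^n}))=2$ with every separating set containing $\{e,z\}$ --- follows correctly. The paper itself states this lemma without proof, citing \cite{bera-dey-sajal}; your argument is the standard one via the cyclic subgroup lattice and matches what that reference does, so there is nothing to flag.
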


\begin{lemma}
\label{lem;enhanced-pgrp} 
 \cite[Theorem 1.1]{bera-dey-sajal}
 Let $G$ be a finite $p$-group that is neither cyclic nor
 generalized quaternion. Then $\kappa(\mathcal{G}_E(G))=1.$
 \end{lemma}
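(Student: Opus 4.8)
The plan is to bound $\kappa(\mathcal{G}_E(G))$ from both sides and show that the two bounds coincide at $1$. For the lower bound, I would first observe that the identity $e$ satisfies $e\in\langle w\rangle$ for every $w\in G$, so $e$ is a dominating vertex and $\mathcal{G}_E(G)$ is therefore connected; since $|G|\ge p^{2}>1$, a connected graph on more than one vertex has $\kappa(\mathcal{G}_E(G))\ge 1$. (By \cref{lem:complete} the graph is moreover not complete, as $G$ is non-cyclic, so deletion of vertices really can disconnect it; but for this direction only the bound $\kappa\ge 1$ is needed.)

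For the upper bound I would exhibit a single vertex whose deletion disconnects the graph, and the natural candidate is the identity $e$: removing it leaves exactly the deleted enhanced power graph $\mathcal{G}_E^{*}(G)$. Thus it suffices to prove that $\mathcal{G}_E^{*}(G)$ is disconnected. The decisive structural input is that a finite $p$-group which is neither cyclic nor generalized quaternion possesses at least two distinct subgroups of order $p$. Granting this, I would choose elements $a,b$ of order $p$ with $\langle a\rangle\ne\langle b\rangle$. By \cref{lema: p and p^i orderd path connd abelong< b>}, any path in $\mathcal{G}_E^{*}(G)$ between two order-$p$ elements forces their cyclic subgroups to coincide; hence no path joins $a$ and $b$, and $\mathcal{G}_E^{*}(G)$ is disconnected. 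Consequently removing the single vertex $e$ disconnects $\mathcal{G}_E(G)$, giving $\kappa(\mathcal{G}_E(G))\le 1$. Combining the two bounds yields $\kappa(\mathcal{G}_E(G))=1$. One may alternatively read the disconnectedness off from \cref{lem:p-grp-roots}, since the minimum component size $\mr(G)$ is strictly smaller than $|G|-1$ precisely when there are at least two subgroups of order $p$.

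The main obstacle is the purely group-theoretic claim that $G$ has more than one subgroup of order $p$. This is the contrapositive of the classical classification of finite $p$-groups with a unique minimal subgroup: such a group is cyclic when $p$ is odd, and cyclic or generalized quaternion when $p=2$. In the write-up I would either cite this classification directly or argue it via the structure of the socle, using the fact that a unique subgroup of order $p$ must be central and then analyzing the few possibilities. Everything else is a short deduction from \cref{lema: p and p^i orderd path connd abelong< b>} together with the observation that $e$ is the single vertex one needs to delete.
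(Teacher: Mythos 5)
Your proof is correct, but note that the paper itself offers no proof of this lemma at all: it is imported verbatim as \cite[Theorem 1.1]{bera-dey-sajal}, so there is no in-paper argument to compare against. What you have written is a sound, self-contained reconstruction along the standard lines of the cited source: the lower bound $\kappa\ge 1$ from the identity being a dominating vertex (with non-completeness guaranteed by \cref{lem:complete} since $G$ is non-cyclic), and the upper bound from the fact that deleting $e$ alone disconnects the graph, which you derive from \cref{lema: p and p^i orderd path connd abelong< b>} together with the classical classification that a finite $p$-group with a unique subgroup of order $p$ is cyclic or (for $p=2$) generalized quaternion. That classification is exactly the group-theoretic input that must be cited and is correctly identified as the crux; your alternative reading via \cref{lem:p-grp-roots} (that $\mr(G)<|G|-1$ once two distinct subgroups of order $p$ exist) is also consistent with how the present paper uses this lemma in the proof of \cref{Prop: VC enh Nilpo no cyclic no Q}.
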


\subsection{Strong product of connected graphs} 
We will now review the definition and some fundamental results related to the strong product of $r$ connected graphs. Suppose $G_i= (V_i, E_i)$ are simple graphs, where $i=1,2,\cdots,r$. The strong product $G_1 \boxtimes  \cdots \boxtimes G_r$ of graphs $G_1, \cdots, G_r$ is a graph such that the vertex set of $G_1 \boxtimes  \cdots \boxtimes G_r$ is 
\[V(G_1 \boxtimes \cdots \boxtimes G_r)= V_1 \times \cdots \times V_r\]  and two distinct vertices $(x_1, \cdots,x_r)$ and $(y_1,\cdots,y_r)$ are adjacent in $G_1 \boxtimes \cdots \boxtimes G_r$ if and only if $x_i=y_i$ or  $x_iy_i \in E_i$ for $i=1, 2, \cdots, r.$

Spacapan in \cite{spacapan} determined the vertex connectivity for the strong product of $k$ connected graphs. We first discuss the vertex connectivity of strong product of $2$ graphs in details so that it becomes clear to the reader. 
Let $G_1= (V_1, E_1)$ and $G_2=(V_2, E_2)$ be two connected graphs. Let $S_1$ be a separating set in $G_1$ and $S_2$ be a separating set in $G_2$. 
Clearly $S_1 \times V_2$ and $V_1 \times S_2$ are separating sets in $G_1 \boxtimes G_2$. These kinds of separating sets are called {\it I-sets}. 
Moreover, if $A_1, A_2, \dots, A_k$ are connected components of $G_1 - S_1$ and $B_1, B_2, \dots, B_{\ell}$ are connected components of $G_2- S_2$, then for any $1 \leq i \leq \ell$ and $ 1 \leq j \leq k$,
\[ (S_1 \times B_i) \cup (S_1 \times S_2) \cup (A_j \times S_2)\]
is a separating set in $G_1 \boxtimes G_2.$ Also, observe that 
\[ (S_1 \times B_i) \cup (S_1 \times S_2) \cup (A_j \times S_2)= [ (A_j \cup S_1) \times (B_i \times S_2) ] \setminus (A_j \times B_i).\] These kind of separating sets are called {\it L-sets}. 


We now formally define the I-sets and L-sets for the strong product of $r$ connected graphs.  Suppose $G_i= (V_i, E_i)$ are graphs, where $i=1,2,\cdots,r$. 
Let $F \subseteq [r]$ be a nonempty subset and for each $i \in F$, let $S_i$ be a separating set in $G_i$. Then, $G_i \setminus S_i$ is disconnected. Let $A_i$ be one of the connected components of $G_i-S_i$.
Further, if $i \in F$, define $U_i= S_i \cup A_i$, $W_i=A_i$ and if $ i \notin F$, define $U_i=W_i=V_i$. Let 
\begin{equation}
\label{eqn:spac} 
S_F= \prod_{i=1}^n U_i- \prod _{i=1}^n W_i=\left( \prod _{i \in F} S_i \cup A_i- \prod_{i \in F} A_i \right)\prod _{i \in [r]\setminus F} V_i.
\end{equation}
It is not difficult to see that any set $S_F$, constructed this way, is a separating set for the graph $G_1\boxtimes \cdots \boxtimes G_r.$ We call
$S_F$ an I-set if $|F|=1$ and an L-set if $|F|>1$. When $r=2$, it is clear that this definition matches with the definition made above. 

Spacapan \cite[Theorem 4.1, Corollary 4.2]{spacapan} proved the following.
\begin{theorem}[Spacapan]
\label{thm:spacapan-strong-prod}
Let $r \geq 2$ and $G_i=(V_i, E_i)$ be connected graphs for $i=1, 2,\cdots, r$. Then, any separating set in $G_1 \boxtimes  \cdots \boxtimes G_r$ is an $I$-set. Therefore, the vertex connectivity of  $G_1 \boxtimes G_2 \boxtimes \dots \boxtimes G_r$ is 
equal to the minimum size of an I-set or L-set.
\end{theorem}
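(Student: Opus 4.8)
The operative content is the first assertion: every minimum separating set $S$ of $G=G_1\boxtimes\cdots\boxtimes G_r$ is either an I-set or an L-set. The formula for $\kappa$ then follows at once, since $\kappa(G)$ is the size of a minimum separating set and every set of the form $S_F$ in \eqref{eqn:spac} is itself separating, so $\kappa(G)$ equals the minimum of $|S_F|$ over all I-sets ($|F|=1$) and L-sets ($|F|>1$). The plan is to fix a minimum separating set $S$, write $V(G)\setminus S=A\sqcup B$ for two nonempty sets admitting no edge between them (group the components of $G-S$ into two blocks), and show that minimality forces $S$ to have the product shape of some $S_F$. The one structural fact driving everything is that each \emph{fiber} of the strong product, namely the set obtained by fixing all but one coordinate, induces a copy of the corresponding factor $G_i$ and is therefore connected; hence any fiber meeting both $A$ and $B$ must contain a vertex of $S$.

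First I would treat $r=2$. For $a\in V_1$ write $C_a=\{a\}\times V_2$ for the column over $a$, and for $b\in V_2$ write $R_b=V_1\times\{b\}$ for the row over $b$; each is connected. I would partition $V_1$ according to whether $C_a$ meets only $A$, only $B$, or both, and symmetrically partition $V_2$ using the rows, thereby defining the projections of $A$, $B$, and $S$ onto each factor. The diagonal adjacency rule of the strong product, under which two vertices are adjacent exactly when in every coordinate they agree or are adjacent, couples the two coordinate pictures rigidly: a column projecting into $A$ and a column projecting into $B$ whose indices are equal or adjacent in $G_1$ cannot be kept apart unless $S$ already supplies a full separating cross-section in the second coordinate.

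The heart of the matter, and the step I expect to be the main obstacle, is the exchange argument that converts ``$S$ is neither an I-set nor an L-set'' into a contradiction with minimality. Here one compares $S$ against the candidate cuts built from its projected data: the I-sets $S_1\times V_2$ and $V_1\times S_2$, and the L-sets $(A_1\cup S_1)\times(A_2\cup S_2)\setminus(A_1\times A_2)$, where the $S_i$ and $A_i$ are read off from the coordinate partitions above. By counting, for each index, how many vertices of $S$ lie in the corresponding fiber, and using connectivity of the fibers to locate where cut vertices are genuinely forced, one shows that if $S$ disagrees with every such candidate, then some fiber carries a removable (redundant) vertex of $S$, or the cut can be redistributed along a row or column to strictly decrease its size. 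Balancing these contributions across the two coordinates is exactly Spacapan's combinatorial core, and is where essentially all of the difficulty resides.

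Finally, the passage to general $r$. The cleanest route is to carry the same projection analysis simultaneously over all $r$ coordinates: for each $i$ partition $V_i$ into indices whose $i$-fibers meet only $A$, only $B$, or both, isolate the set $F\subseteq[r]$ of coordinates that genuinely see the cut, and rerun the exchange argument coordinate by coordinate to force $S=S_F$. One may instead induct by writing $G=(G_1\boxtimes\cdots\boxtimes G_{r-1})\boxtimes G_r$ and invoking the $r=2$ case on the connected factor $H=G_1\boxtimes\cdots\boxtimes G_{r-1}$: the I-set branch unfolds immediately, since the $H$-part of the cut is then a minimum separating set of $H$ to which the inductive hypothesis applies, whereas the L-set branch requires checking that the nested product optimization collapses to a single $S_F$, which is precisely where carrying the full $r$-coordinate data rather than peeling one factor at a time is most transparent. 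Either way, once $S=S_F$ is established, the vertex connectivity equals the minimum of $|S_F|$ over all I-sets and L-sets, as claimed.
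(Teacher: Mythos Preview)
The paper does not prove this theorem at all: it is quoted from Spacapan's paper \cite{spacapan} (their Theorem~4.1 and Corollary~4.2) and used as a black box. So there is no ``paper's own proof'' to compare your proposal against.

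As for the proposal itself, it is a correct high-level outline of how Spacapan's argument is organized---project the cut to each factor, use connectivity of fibers, and run an exchange/minimality argument to force the product shape---but it is not a proof. You yourself flag the crucial step: ``Balancing these contributions across the two coordinates is exactly Spacapan's combinatorial core, and is where essentially all of the difficulty resides.'' That sentence is accurate, and your write-up does not supply that core. Concretely, you never carry out the counting that shows a non-$S_F$ minimum cut must have a redundant vertex; you only assert that such a comparison can be made. The induction paragraph has the same issue: peeling off one factor and applying the $r=2$ case to $H\boxtimes G_r$ does \emph{not} directly yield an $S_F$ in the original product, because a minimum separating set of $H$ need not decompose further without re-running the full argument, and the L-set branch you mention is precisely where the bookkeeping you omit is needed.

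One small point: the theorem as printed in this paper contains a slip (``any separating set \ldots\ is an I-set'' cannot be literally true, since L-sets separate too). You silently corrected this to ``every minimum separating set is an I-set or an L-set,'' which is the intended statement and the one Spacapan actually proves.
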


The following proposition follows from \cref{lem;enhanced-coprime} and the definition of the enhanced power graph. 
\begin{proposition}
\label{prop:enhanced-power-strong-prod}
Let $G = P_1\times \cdots \times P_r$ where each $P_i$ is a $p$-group with respect to the prime $p_i$.
Then $\mathcal{G}_E(G)= \mathcal{G}_E(P_1) \boxtimes \cdots \boxtimes \mathcal{G}_E(P_r).$
\end{proposition}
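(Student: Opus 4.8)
The plan is to prove that $\mathcal{G}_E(G) = \mathcal{G}_E(P_1) \boxtimes \cdots \boxtimes \mathcal{G}_E(P_r)$ by showing that the two graphs have the same vertex set and the same edge set. For the vertex set, this is immediate: the vertex set of $\mathcal{G}_E(G)$ is $G = P_1 \times \cdots \times P_r$ (identifying the internal direct product with the external one), and by definition the vertex set of the strong product is $\prod_{i=1}^r V(\mathcal{G}_E(P_i)) = P_1 \times \cdots \times P_r$. So the whole content is in matching the edges, and I would state both directions as a single biconditional about adjacency of two distinct vertices $u = (u_1, \ldots, u_r)$ and $v = (v_1, \ldots, v_r)$.

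First I would unpack what adjacency in the strong product means using the definition given in the excerpt: $u$ and $v$ are adjacent in $\mathcal{G}_E(P_1) \boxtimes \cdots \boxtimes \mathcal{G}_E(P_r)$ if and only if for every $i \in [r]$, either $u_i = v_i$ or $u_i \sim v_i$ in $\mathcal{G}_E(P_i)$ (i.e.\ $u_i v_i \in E(\mathcal{G}_E(P_i))$). The key observation, which I would record as the crux of the proof, is that in each coordinate the condition ``$u_i = v_i$ or $u_i$ and $v_i$ are enhanced-power-adjacent in $P_i$'' is equivalent to the single condition ``there exists $w_i \in P_i$ with $u_i, v_i \in \langle w_i \rangle$'': indeed, if $u_i = v_i$ one may take $w_i = u_i$, and conversely if such a $w_i$ exists with $u_i \neq v_i$ then $u_i \sim v_i$ by the definition of the enhanced power graph. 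So strong-product adjacency is precisely the statement that \emph{for each $i$ there is a $w_i \in P_i$ with $u_i, v_i \in \langle w_i \rangle$}.

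Next I would connect this coordinatewise condition to global adjacency in $\mathcal{G}_E(G)$. For the forward direction, suppose $u \sim v$ in $\mathcal{G}_E(G)$, so there is $w = (w_1, \ldots, w_r) \in G$ with $u, v \in \langle w \rangle$. Since $\langle w \rangle \subseteq \langle w_1 \rangle \times \cdots \times \langle w_r \rangle$, projecting onto the $i$-th coordinate gives $u_i, v_i \in \langle w_i \rangle$ for each $i$, which is exactly the strong-product adjacency condition. For the converse, suppose that for each $i$ there is $w_i \in P_i$ with $u_i, v_i \in \langle w_i \rangle$; set $w = (w_1, \ldots, w_r)$. Here is where the coprimality of the orders of distinct Sylow subgroups enters: because the $p_i$ are distinct primes, $\gcd(o(w_i), o(w_j)) = 1$ for $i \neq j$, so the orders of the cyclic factors are pairwise coprime and $\langle w \rangle = \langle w_1 \rangle \times \cdots \times \langle w_r \rangle$ (this is the standard Chinese-Remainder-Theorem fact, which also underlies \cref{lem;enhanced-coprime}). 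Hence $u = (u_1, \ldots, u_r) \in \langle w \rangle$ and likewise $v \in \langle w \rangle$, so $u \sim v$ in $\mathcal{G}_E(G)$.

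The main obstacle, and the only place any real group theory is used, is the converse direction: one must produce a single cyclic subgroup of $G$ containing both $u$ and $v$ from the coordinatewise witnesses $w_i$, and this relies essentially on the fact that $\langle (w_1,\ldots,w_r)\rangle$ equals the full product $\langle w_1\rangle \times \cdots \times \langle w_r\rangle$ when the $w_i$ have pairwise coprime orders. Everything else is bookkeeping with the definitions. I would therefore keep the exposition short, citing the coprime-order/CRT fact explicitly and noting that the forward direction is just projection.
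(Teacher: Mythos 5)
Your proof is correct and is essentially the paper's argument written out in full: the paper states the proposition without a detailed proof, remarking only that it ``follows from \cref{lem;enhanced-coprime} and the definition of the enhanced power graph,'' and your reduction of strong-product adjacency to the coordinatewise condition, with the forward direction by projection and the converse via the coprime-order fact $\langle(w_1,\ldots,w_r)\rangle=\langle w_1\rangle\times\cdots\times\langle w_r\rangle$, is exactly the expansion of that one-line justification. There are no gaps; your handling of the $u_i=v_i$ case by taking $w_i=u_i$ correctly accounts for the ``equal or adjacent'' clause in the strong product definition.
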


We next prove the following inequality concerning two sequences of positive integers.
\begin{lemma}
\label{lem:ineq1}
Let $a_1,\cdots, a_r, b_1, \cdots ,b_r$ be positive integers such that $a_i \geq b_i$ for $1 \leq i \leq r$. Then, 
\[ \prod_{i=1}^r(a_i+1)- \prod_{i=1}^ra_i \geq \prod_{i=1}^r(b_i+1)- \prod_{i=1}^rb_i.\]
\end{lemma}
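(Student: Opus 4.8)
The plan is to exploit the fact that the quantity $\prod_{i=1}^r(x_i+1)-\prod_{i=1}^r x_i$ has a transparent combinatorial meaning once the first product is expanded. Expanding by the distributive law gives $\prod_{i=1}^r(x_i+1)=\sum_{S\subseteq[r]}\prod_{i\in S}x_i$, where the empty product is taken to be $1$. Subtracting $\prod_{i=1}^r x_i$, which is exactly the summand corresponding to $S=[r]$, yields the clean identity $\prod_{i=1}^r(x_i+1)-\prod_{i=1}^r x_i=\sum_{S\subsetneq[r]}\prod_{i\in S}x_i$, the sum ranging over all \emph{proper} subsets of $[r]$. I would record this identity first, for $x=a$ and for $x=b$ separately.

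With the identity in hand the inequality becomes a termwise comparison. For each fixed proper subset $S\subsetneq[r]$, the hypothesis $a_i\ge b_i\ge 1>0$ for every $i$ forces $\prod_{i\in S}a_i\ge\prod_{i\in S}b_i$, since a product of finitely many positive numbers is monotone nondecreasing in each factor. Summing these inequalities over the common index set $\{S:S\subsetneq[r]\}$ gives $\sum_{S\subsetneq[r]}\prod_{i\in S}a_i\ge\sum_{S\subsetneq[r]}\prod_{i\in S}b_i$, which is precisely the claimed bound after reapplying the identity to both sides.

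An alternative route, which I would mention as it makes the monotonicity structural, is a one-coordinate-at-a-time telescoping. Writing $F(x_1,\dots,x_r)=\prod_{i=1}^r(x_i+1)-\prod_{i=1}^r x_i$ and fixing all coordinates but the $k$-th, one has the affine expression $F=x_k(P-Q)+P$, where $P=\prod_{i\ne k}(x_i+1)$ and $Q=\prod_{i\ne k}x_i$; since every factor satisfies $x_i+1>x_i>0$ we get $P\ge Q$, so $F$ is nondecreasing in $x_k$. Interpolating between $b$ and $a$ through the vectors $(a_1,\dots,a_k,b_{k+1},\dots,b_r)$ for $k=0,\dots,r$ and applying this monotonicity at each step chains the desired inequality. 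Either way, there is no genuine obstacle here: the only point requiring a word of care is the positivity assumption $b_i\ge 1$, which is what guarantees the elementary product monotonicity (respectively $P\ge Q$) that drives the argument.
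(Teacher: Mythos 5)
Your proof is correct, but it takes a genuinely different route from the paper's. The paper argues by peeling off one coordinate at a time: it writes $\prod_{i=1}^r(a_i+1)-\prod_{i=1}^r(b_i+1)$ as $\prod_{i=1}^{r-1}(a_i+1)a_r-\prod_{i=1}^{r-1}(b_i+1)b_r$ plus the remainder $\prod_{i=1}^{r-1}(a_i+1)-\prod_{i=1}^{r-1}(b_i+1)$, discards the remainder as nonnegative, and iterates until all the $+1$'s are stripped away, arriving at $\prod_{i=1}^r(a_i+1)-\prod_{i=1}^r(b_i+1)\geq \prod_{i=1}^r a_i-\prod_{i=1}^r b_i$, which rearranges to the lemma. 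Your main argument instead expands $\prod_{i=1}^r(x_i+1)=\sum_{S\subseteq[r]}\prod_{i\in S}x_i$, identifies $\prod_{i=1}^r(x_i+1)-\prod_{i=1}^r x_i$ with $\sum_{S\subsetneq[r]}\prod_{i\in S}x_i$, and compares term by term over proper subsets; this gives the inequality in one shot with no induction, makes the combinatorial content visible, and shows the hypothesis can be relaxed to nonnegative reals ($a_i\geq b_i\geq 0$) --- indeed your cautionary remark that $b_i\geq 1$ is the crucial positivity is slightly overcautious, since $b_i\geq 0$ already suffices for the termwise product comparison. Your alternative telescoping route (the affine expression $F=x_k(P-Q)+P$ showing $F$ is nondecreasing in each coordinate, then interpolating from $b$ to $a$) is closer in spirit to the paper's proof, since both proceed one coordinate at a time, though the decompositions still differ: the paper simultaneously strips the $+1$ from matched factors of both products, while you move the argument vector coordinatewise while keeping the function fixed.
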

\begin{proof}
We observe that
\begin{equation*}
  \prod_{i=1}^r(a_i+1) - \prod_{i=1}^r(b_i+1)  =   \prod_{i=1}^{r-1}(a_i+1)a_r - \prod_{i=1}^{r-1}(b_i+1)b_r +  \prod_{i=1}^{r-1}(a_i+1) -    \prod_{i=1}^{r-1}(b_i+1) 
\end{equation*}
As $a_r \geq b_r$, we have 
\begin{equation*}
  \prod_{i=1}^r(a_i+1) -    \prod_{i=1}^r(b_i+1)  \geq   \prod_{i=1}^{r-1}(a_i+1)a_r - \prod_{i=1}^{r-1}(b_i+1)b_r  
\end{equation*}
Using the same idea repeatedly, we can get 
\begin{equation*}
  \prod_{i=1}^r(a_i+1) -    \prod_{i=1}^r(b_i+1)  \geq   \prod_{i=1}^{r}a_i - \prod_{i=1}^{r}b_i.
\end{equation*}
This completes the proof. 
\end{proof}  

\section{Proofs of main theorems}

In this section, we prove our main results. Towards that, we start with the following proposition. 

\begin{proposition}\label{Prop: VC enh Nilpo no cyclic no Q}
Let $G_1=P_1 \times \cdots \times P_r$ be a finite nilpotent group having no Sylow subgroups which are either cyclic or generalized quaternion. Then 
$$\kappa(\mathcal{G}_E(G_1))= \tau=\min_{T \subseteq [r]} \left\{ \prod_{i \in T} \big| P_i \big| \big(\prod _{i \in [r]\setminus T} (\mr(P_i)+1) - \prod_{i \in [r]\setminus T}\mr(P_i) \big )\right\}.$$
\end{proposition}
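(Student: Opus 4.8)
The plan is to combine \cref{prop:enhanced-power-strong-prod} with Spacapan's theorem (\cref{thm:spacapan-strong-prod}) and the structural description of the components of the factors given by \cref{lem:p-grp-roots}. Since each $P_i$ is neither cyclic nor generalized quaternion, \cref{lem;enhanced-pgrp} gives $\kappa(\mathcal{G}_E(P_i))=1$, so each factor graph $\mathcal{G}_E(P_i)$ has a unique dominating vertex, namely the identity $e_i$, and removing it disconnects the graph. By \cref{prop:enhanced-power-strong-prod} we have $\mathcal{G}_E(G_1)=\mathcal{G}_E(P_1)\boxtimes\cdots\boxtimes\mathcal{G}_E(P_r)$, so $\kappa(\mathcal{G}_E(G_1))$ equals the minimum size of an I-set or L-set by Spacapan's theorem.

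First I would identify the optimal separating set $S_i$ in each factor. In $\mathcal{G}_E(P_i)$ the only dominating vertex is the identity $e_i$ (this uses \cref{lem:complete} to rule out $P_i$ being cyclic and \cref{lem;enhanced-pgrp} to pin the connectivity at $1$), and taking $S_i=\{e_i\}$ disconnects the graph. By \cref{lem:p-grp-roots}, the smallest connected component of $\mathcal{G}_E^*(P_i)=\mathcal{G}_E(P_i)-\{e_i\}$ has size exactly $\mr(P_i)$, since any minimal component consists of $\mathrm{Roots}(a)\setminus\{e_i\}$ for an order-$p$ element $a$ realizing the minimum in \eqref{Equn: defin of root}. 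Thus for each $i$ I choose $S_i=\{e_i\}$ (so $|S_i|=1$) and $A_i$ to be a smallest component of $\mathcal{G}_E^*(P_i)$ (so $|A_i|=\mr(P_i)$, giving $|S_i\cup A_i|=\mr(P_i)+1$).

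Next I would evaluate the size of the general separating set $S_F$ from \eqref{eqn:spac} for a subset $F\subseteq[r]$. With the above choices, $|U_i|=|S_i\cup A_i|=\mr(P_i)+1$ and $|W_i|=|A_i|=\mr(P_i)$ for $i\in F$, while $|U_i|=|W_i|=|P_i|$ for $i\notin F$. Substituting into \eqref{eqn:spac} gives
\[
|S_F|=\prod_{i\notin F}|P_i|\left(\prod_{i\in F}(\mr(P_i)+1)-\prod_{i\in F}\mr(P_i)\right).
\]
Setting $T=[r]\setminus F$ this is exactly the expression being minimized in the definition of $\tau$. Minimizing over all valid choices of $F$ (equivalently all $T\subseteq[r]$) yields $\kappa(\mathcal{G}_E(G_1))\le\tau$, and this matches the lower bound from Spacapan's theorem once I argue that the chosen $S_i$ and $A_i$ are size-optimal among all separating-set/component choices.

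The main obstacle is establishing that no other choice of separating sets $S_i$ or components $A_i$ can beat the ones above, so that the minimum over I-sets and L-sets genuinely equals $\tau$. For this I need two monotonicity facts: that in each factor the minimizing separating set has size $1$ (guaranteed by $\kappa(\mathcal{G}_E(P_i))=1$) and the minimizing component has size $\mr(P_i)$ (guaranteed by \cref{lem:p-grp-roots}); and that plugging larger values of $|S_i|$ or $|A_i|$ into \eqref{eqn:spac} never decreases $|S_F|$. The latter is precisely the content of \cref{lem:ineq1}: writing $a_i=\mr(P_i)$ and replacing them with any feasible larger component sizes $b_i\ge a_i$ only increases the difference of products, so the choice $|A_i|=\mr(P_i)$ is optimal. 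Assembling these observations shows the minimum-size I/L-set equals $\tau$, completing the proof.
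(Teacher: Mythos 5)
Your proposal is correct and follows essentially the same route as the paper's own proof: decompose via \cref{prop:enhanced-power-strong-prod}, apply Spacapan's theorem, realize the upper bound with $S_i=\{e_i\}$ and a minimum component $A_i$ of size $\mr(P_i)$, and obtain the matching lower bound from $|S_i|\ge 1$, $|B_i|\ge \mr(P_i)$, and \cref{lem:ineq1}. The only cosmetic slip is that your final paragraph swaps the roles of $a_i$ and $b_i$ relative to the statement of \cref{lem:ineq1}, but you apply the inequality in the correct direction.
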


\begin{proof}
Initially, we show that we can construct a separating set of the above size. For any $T \subseteq [r]$,  it is enough to construct a separating set of size 
\begin{equation} \label{eqn:req-size}
\prod_{i \in T} \big| P_i \big| \big(\prod _{i \in [r]\setminus T} (\mr(P_i)+1) - \prod_{i \in [r]\setminus T}\mr(P_i) \big ).
\end{equation}
By \cref{lem;enhanced-pgrp}, for $1 \leq i \leq r$, $\mathcal{G}_E(P_i)$ has vertex connectivity $1$ and $\{e_i\}$ is a separating set where $e_i$ denotes the identity element of the subgroup $P_i$. Let $S_i= \{e_i\}$ for $i \notin T$. By \cref{lem:p-grp-roots}, the minimum size of a connected component of $\mathcal{G}_E^{*}(P_i)$ is clearly $\mr(P_i).$ Here we take $A_i$ to be one of the minimum size connected components of 
$\mathcal{G}_E^{*}(P_i)$. Then $|A_i|=\mr(P_i)$ for $i \notin T$. Now, we can construct a separating set $S_{T^c}$ from $T$ by using \eqref{eqn:spac}, where $S_{T^c}$ is the following:
\begin{equation}\label{Eqn: a separating set S_T}
 \left( \prod _{i  \in  [r] \setminus T} A_i \cup \{e\}- \prod_{i \in [r] \setminus T} A_i \right)\prod _{i \in  T} P_i.   
\end{equation}
Moreover, the size of $S_{T^c}$ is \eqref{eqn:req-size}. 
		
We next need to show that any separating set of $\mathcal{G}_E(G_1)$ must be bigger than the above quantity. 		
By \cref{prop:enhanced-power-strong-prod} and  \cref{thm:spacapan-strong-prod}, any separating set must be of the form 
\[ \left( \prod _{i \in F} S_i \cup B_i- \prod_{i \in F} B_i \right)\prod _{i \in [r]\setminus F} P_i \]
where $F$ is a subset of $[r]$, and for each $i \in F$, $S_i$ is a separating set of $\mathcal{G}_E(P_i)$ and $B_i$ is some connected component of $\mathcal{G}_E^{*}(P_i)$. 
Now, any separating set $S_i$ of  $\mathcal{G}_E(P_i)$ has size at least $1$. Therefore, we have 
\begin{equation}
\label{eqn:imp4}
  \left( \prod _{i \in F} | S_i \cup B_i |- \prod_{i \in F} | B_i | \right)\prod _{i \in [r]\setminus F} | P_i | \geq 
   \left( \prod _{i \in F}( |  B_i |+1)- \prod_{i \in F} | B_i | \right)\prod _{i \in [r]\setminus F} | P_i |. 
   \end{equation}
 As any connected component of 
$\mathcal{G}_E^{*}(P_i)$ has size at least $\mr(P_i)$, we have 
$|B_i| \geq |\mr(P_i)|$. Therefore, using \cref{lem:ineq1}, we have
   \begin{equation}
   \label{eqn:imp5}
  \left( \prod _{i \in F}( |  B_i |+1)- \prod_{i \in F} | B_i | \right)  \prod _{i \in [r]\setminus F} | P_i |   \geq  \left( \prod _{i \in F}( |  \mr(P_i) |+1)- \prod_{i \in F} | \mr(P_i) | \right)  \prod _{i \in [r]\setminus F}  | P_i | .\end{equation}
Combining \eqref{eqn:imp4} and \eqref{eqn:imp5}, we have 
\begin{equation}
  \left( \prod _{i \in F} | S_i \cup B_i |- \prod_{i \in F} | B_i | \right)\prod _{i \in [r]\setminus F} | P_i | \geq  
  \left( \prod _{i \in F}( |  \mr(P_i) |+1)- \prod_{i \in F} | \mr(P_i) | \right)\prod _{i \in [r]\setminus F} | P_i | .
\end{equation}
 Thus, we get
\[ \left( \prod _{i \in F} | S_i \cup B_i |- \prod_{i \in F} | B_i | \right)\prod _{i \in [r]\setminus F} | P_i | \geq \min_{F \subseteq [r]} \left\{ \prod_{i \in [r]\setminus F} \big| P_i \big| \big(\prod _{i \in F} (\mr(P_i)+1) - \prod_{i \in F}\mr(P_i) \big )\right\}. \] This completes the proof. 
\end{proof}
\begin{proof}[Proof of \cref{Thm:VC Enh Nil with Cyclic only}]
First, suppose that $G_1= P_1 \times \cdots \times P_r$. In this case, $\kappa(\mathcal{G}_E(G_1))=\tau$ by \cref{Prop: VC enh Nilpo no cyclic no Q}. Now suppose that $G=G_1\times \mathbb{Z}_n,$ where $\gcd(|G_1|, n)=1.$ Then, by
\cref{prop:enhanced-power-strong-prod}
\begin{equation*}
\mathcal{G}_E(G)= \mathcal{G}_E(P_1) \boxtimes \cdots \boxtimes \mathcal{G}_E(P_r)\boxtimes \mathcal{G}_E(\mathbb{Z}_n).
\end{equation*}
Using \cref{lem:complete}, we have 
\begin{equation*}
\mathcal{G}_E(G)= K_n \boxtimes \mathcal{G}_E(P_1) \boxtimes \cdots \boxtimes \mathcal{G}_E(P_r).
\end{equation*}
where $K_n$ denotes the complete graph on $n$ vertices. Therefore, we clearly have 
\begin{equation*}
\kappa(\mathcal{G}_E(G))= n \kappa(\mathcal{G}_E(G_1)). 
\end{equation*}
The proof is now complete by using \cref{Prop: VC enh Nilpo no cyclic no Q}. 
\end{proof}

\begin{proof}[Proof of \cref{thm:main-nilp-with-cyc-gen-quat}]
 We at first consider the case when $G= G_1 \times Q_{2^k}.$
By
\cref{prop:enhanced-power-strong-prod}, we have
\begin{equation*}
\mathcal{G}_E(G)= \mathcal{G}_E(P_1) \boxtimes \cdots \boxtimes \mathcal{G}_E(P_r) \boxtimes \mathcal{G}_E(P_{r+1}) .
\end{equation*}
We can now proceed in an identical manner to the proof of 
\cref{Prop: VC enh Nilpo no cyclic no Q}. We only need to use \cref{dom of gen Q_2^n in enhced} to ensure that any separating set of $\mathcal{G}_E(Q_{2^k})$ has cardinality atleast $2$ and the size of a minimum connected component of $\mathcal{G}_E^{**}(Q_{2^k})$ is $2$. Thus, we can prove  
$\kappa(\mathcal{G}_E(G))=\mu$ 
where $\mu$ is taken from \eqref{Equa: mu for nilpo grp having Q}.
The remaining part of the proof follows from the observation that if $G= G_1 \times Q_{2^k} \times \mathbb{Z}_n$, then $\kappa(\mathcal{G}_E(G))= n \kappa (\mathcal{G}_E(G_1 \times Q_{2^k}))$. 
\end{proof}

\

\subsection{Abelian groups}
Here, we will explicitly calculate the vertex connectivity of the enhanced power graph of any finite abelian group. 
From \cref{Prop: VC enh Nilpo no cyclic no Q}, we can see that it is sufficient to investigate the minimum possible size of a connected component in $\mathcal{G}_E^{*}(P)$ where $P$ is a finite abelian $p$-group, with the form $$P= \mathbb{Z}_{p^{t_1}} \times  \cdots \mathbb{Z}_{p^{t_k}} $$ where $1 \leq t_1 \leq \dots \leq t_k.$ Therefore, in order to prove \cref{thm:vc-abelian-exact}, we first prove the following lemmas that tell us about the number of roots of an element in $P$ and the minimum possible size of a connected component in $\mathcal{G}_E^{*}(P).$

For a $p$-group $P$ and two elements $x,y \in P$, we call $y$ to be a \emph{$m$-th root} of $x$ if $m$ is the smallest positive integer such that $y^m=x$.

\begin{lemma}
\label{lem:number-of-roots-p-order-element}
Let $P= \mathbb{Z}_{p^{t_1}} \times  \cdots \times \mathbb{Z}_{p^{t_k}},$ where $1 \leq t_1 \leq \dots \leq t_k$ and $z\in P$ be an element of order $p$. Then, $z$ has at least  $(p-1)\frac{p^{t_1k}-1}{p^{k}-1} $ many roots. 
\end{lemma}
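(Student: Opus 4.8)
The plan is to count the roots of $z$ by organizing them according to their order. The guiding observation is this: if $g$ is a root of $z$, meaning $z\in\langle g\rangle$, and $o(g)=p^{s}$, then since $\langle g\rangle$ is cyclic with the single element $z$ of order $p$ inside it, $\langle z\rangle$ must be the unique order-$p$ subgroup of $\langle g\rangle$; equivalently $p^{s-1}g$ generates $\langle z\rangle$, i.e. $p^{s-1}g\in\langle z\rangle\setminus\{e\}$. Conversely, if $p^{s-1}g\in\langle z\rangle\setminus\{e\}$ then $p^{s-1}g=cz$ for some $c\in\{1,\dots,p-1\}$, so $z\in\langle cz\rangle\subseteq\langle g\rangle$ (as $\gcd(c,p)=1$) and $g$ is a root; moreover $p^{s-1}g$ has order $p$, which forces $o(g)=p^{s}$. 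So if I write $N_{s}$ for the number of roots of order exactly $p^{s}$, the sets are automatically disjoint and $|\mathrm{Roots}(z)|=\sum_{s\ge1}N_{s}$; for the lower bound I will only keep the terms with $s\le t_{1}$.

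Next I would count each $N_{s}$ through the endomorphism $\mu_{s}\colon P\to P$, $g\mapsto p^{s-1}g$. Its kernel is the $p^{s-1}$-torsion $\{g:o(g)\mid p^{s-1}\}$, of order $\prod_{i=1}^{k}p^{\min(s-1,t_{i})}$, and every preimage of a nonzero $p$-torsion element automatically lands among the order-$p^{s}$ elements by the remark above. Hence
\[
N_{s}=\big|(\langle z\rangle\setminus\{e\})\cap\operatorname{im}\mu_{s}\big|\cdot\big|\ker\mu_{s}\big|.
\]
The crux is therefore to show that for every $s\le t_{1}$ all $p-1$ nonzero multiples of $z$ lie in $\operatorname{im}\mu_{s}$, and that $|\ker\mu_{s}|=p^{(s-1)k}$. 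Both facts use the hypothesis $s\le t_{1}\le t_{i}$ for all $i$, which removes every truncation in the factors $\mathbb{Z}_{p^{t_{i}}}$: writing the order-$p$ element as $z=(a_{1}p^{t_{1}-1},\dots,a_{k}p^{t_{k}-1})$, the element $(a_{1}p^{t_{1}-s},\dots,a_{k}p^{t_{k}-s})$ is a well-defined $\mu_{s}$-preimage of $z$ (here $t_{i}-s\ge 0$), and scaling by $c$ shows each $cz\in\operatorname{im}\mu_{s}$; likewise $\min(s-1,t_{i})=s-1$ for all $i$ gives $|\ker\mu_{s}|=p^{(s-1)k}$. Thus $N_{s}=(p-1)p^{(s-1)k}$ for every $s\le t_{1}$.

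Summing the resulting geometric series then finishes the argument:
\[
|\mathrm{Roots}(z)|\ \ge\ \sum_{s=1}^{t_{1}}(p-1)p^{(s-1)k}\ =\ (p-1)\sum_{j=0}^{t_{1}-1}p^{jk}\ =\ (p-1)\,\frac{p^{t_{1}k}-1}{p^{k}-1},
\]
which is exactly the claimed bound (the discarded terms with $s>t_{1}$ only increase the count, so dropping them is harmless for a lower bound).

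The step I expect to be the real obstacle is the bookkeeping of orders together with the image computation for $\mu_{s}$: one must verify carefully that the condition ``$p^{s-1}g$ is a nonzero $p$-torsion element'' pins down $o(g)=p^{s}$ exactly (so the $N_{s}$ add without overcounting) and that $\langle z\rangle\setminus\{e\}$ genuinely sits inside $\operatorname{im}\mu_{s}$ for $s\le t_{1}$. Restricting to $s\le t_{1}$ is precisely what makes the kernel and image uniform across all Sylow factors, sidestepping the coordinatewise truncation that would complicate the count for larger $s$; this is where the ordering $t_{1}\le\cdots\le t_{k}$ and the fact that $z$ has order $p$ are both essential.
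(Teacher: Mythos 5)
Your proposal is correct and follows essentially the same route as the paper: both stratify the roots of $z$ by their order $p^{s}$ (equivalently, by the $p$-valuation of the minimal exponent), count $(p-1)p^{(s-1)k}$ roots at each level $s\le t_{1}$, and sum the geometric series to get $(p-1)\frac{p^{t_{1}k}-1}{p^{k}-1}$. Your formulation via the endomorphism $g\mapsto p^{s-1}g$ with its kernel and image is simply a more rigorous rendering of the paper's coordinatewise choice count (and in fact cleans up the paper's slightly loose remark that ``$z$ may or may not have $p^{r}$-th roots,'' since for $r\le t_{1}-1$ your image computation shows they always exist).
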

\begin{proof}
We can write $z=(z_1,z_2, \dots, z_k),$ where \[z_i \in \{0, p^{t_i-1}, 2p^{t_i-1}, \dots, (p-1)p^{t_i-1} \} \text{ for } 1 \leq i \leq k.\] The number of elements $y$ such that $y$ is $a$-th root of $z$, 
where $\gcd(a,p)=1$, is $p-1$. Let $y=(y_1,y_2,\dots, y_k)$ be a $p$-th root of $z$. For this to happen, each $y_i$ has $p$ choices. Moreover, each such choice gives a $p$-th root of $z$. Thus, the number of $p$-th roots of $z$ is $p^k$.
Therefore, the number of $ap$-th roots of $z$, where $\gcd(a,p)=1$, is $(p-1)p^k.$ Let $1 \leq r \leq t_1-1$ and we count the number of $ap^{r}$-th roots of $z$.  

Let $y=(y_1,y_2,\dots, y_k)$ be a $p^r$-th root of $z$. For this to happen, each $y_i$ has $p^r$ choices and each such choice gives a $p^r$-th root of $z$.  Thus, the number of $p^r$-th roots of $z$ is $p^{kr}$. Therefore, the number of $ap^r$-th roots of $z$, where $\gcd(a,p)=1$, is $(p-1)p^{rk}.$ Now $z$ may or may not have $p^r$-th roots. Summing over, we can see that $z$ has at least \[(p-1)+(p-1)p^k+(p-1)p^{2k}+\dots+(p-1)p^{(t_1-1)k}= (p-1)\frac{p^{t_1k}-1}{p^{k}-1} \] 
many roots. This completes the proof. 
\end{proof} 
\begin{lemma}\label{Lemma:min-pos-size-abel}
Let $P= \mathbb{Z}_{p^{t_1}} \times \mathbb{Z}_{p^{t_2}} \times \dots \times \mathbb{Z}_{p^{t_k}},$ where $1 \leq t_1 \leq \dots \leq t_k.$ Then the minimum possible size of a connected component in $\mathcal{G}_E^{*}(P)$ is exactly $(p-1)\frac{p^{t_1k}-1}{p^{k}-1} .$   
\end{lemma}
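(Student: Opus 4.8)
The plan is to combine the lower bound from \cref{lem:number-of-roots-p-order-element} with a matching upper bound produced by one explicit element. By \cref{lem:p-grp-roots} (applicable since an abelian $p$-group with $k\geq 2$ is neither cyclic nor generalized quaternion), the minimum size of a connected component of $\mathcal{G}_E^{*}(P)$ equals $\mr(P)=\min_{o(a)=p}|\mathrm{Roots}(a)|$; the degenerate case $k=1$ is settled directly, since then $P$ is cyclic, $\mathcal{G}_E^{*}(P)$ is complete on $p^{t_1}-1$ vertices by \cref{lem:complete}, and $(p-1)\frac{p^{t_1}-1}{p-1}=p^{t_1}-1$. Thus it suffices to prove $\mr(P)=(p-1)\frac{p^{t_1k}-1}{p^{k}-1}$.

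The inequality $\mr(P)\geq (p-1)\frac{p^{t_1k}-1}{p^{k}-1}$ is immediate from \cref{lem:number-of-roots-p-order-element}, so the whole content lies in the reverse inequality, which I would obtain by exhibiting the single element $z=(p^{t_1-1},0,\ldots,0)$ of order $p$ and computing $|\mathrm{Roots}(z)|$ exactly. The computation is organized by the \emph{depth} of a root: if $y\in\mathrm{Roots}(z)$ then $z\in\langle y\rangle$, so $o(y)=p^{r+1}$ for some $r\geq 0$ and $\langle z\rangle$ is the unique order-$p$ subgroup of $\langle y\rangle$, which forces $y^{p^{r}}=cz$ for some $c\in\{1,\ldots,p-1\}$. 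Conversely, any solution $y$ of $y^{p^{r}}=cz$ has order exactly $p^{r+1}$ (since $y^{p^{r}}=cz\neq e$ while $y^{p^{r+1}}=(cz)^{p}=e$), and distinct pairs $(r,c)$ yield disjoint families, so $|\mathrm{Roots}(z)|=\sum_{r\geq 0}\sum_{c=1}^{p-1} N(r,c)$, where $N(r,c)$ denotes the number of solutions of $y^{p^{r}}=cz$.

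I would then solve $y^{p^{r}}=cz$ coordinatewise: the first-coordinate equation $p^{r}y_1\equiv cp^{t_1-1}\pmod{p^{t_1}}$ is solvable precisely when $r\leq t_1-1$ and then has $p^{r}$ solutions, while each coordinate $i\geq 2$ contributes the $p^{r}$ solutions of $p^{r}y_i\equiv 0\pmod{p^{t_i}}$; hence $N(r,c)=p^{rk}$ for $0\leq r\leq t_1-1$ and $N(r,c)=0$ for $r\geq t_1$. Summing gives $|\mathrm{Roots}(z)|=\sum_{r=0}^{t_1-1}(p-1)p^{rk}=(p-1)\frac{p^{t_1k}-1}{p^{k}-1}$, which yields $\mr(P)\leq(p-1)\frac{p^{t_1k}-1}{p^{k}-1}$ and closes the argument.

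The main obstacle is this reverse inequality, and within it the vanishing $N(r,c)=0$ for $r\geq t_1$: the key point is that $z$ must be supported on a factor of minimal exponent $t_1$, so its nonzero coordinate has $p$-adic valuation exactly $t_1-1$, which is precisely what obstructs $z$ from being a $p^{t_1}$-th power and caps the attainable depth at $r=t_1-1$. Care is also needed in the order bookkeeping—checking that every solution of $y^{p^{r}}=cz$ has order exactly $p^{r+1}$ and that the families over $(r,c)$ are pairwise disjoint—so that the coordinatewise counts assemble into the closed form without over- or undercounting.
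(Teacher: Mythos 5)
Your proof is correct and takes essentially the same approach as the paper: the lower bound comes from \cref{lem:number-of-roots-p-order-element}, and the matching upper bound from counting, depth by depth, the roots of one explicit order-$p$ element (you use $(p^{t_1-1},0,\dots,0)$ where the paper uses the element with $i$-th coordinate $p^{t_i-1}$, but the coordinatewise count of $p^{rk}$ roots at depth $r\leq t_1-1$, capped by the valuation $t_1-1$ in a minimal-exponent factor, is identical). Your $(r,c)$-bookkeeping via $y^{p^r}=cz$ and the explicit treatment of the cyclic case $k=1$ are in fact slightly more careful than the paper's exposition.
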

\begin{proof}
We consider the element $$x= (p^{t_1-1}, p^{t_k-1}, \dots, p^{t_r-1}).$$
Let $C_x$ be the component of $\mathcal{G}_E^{*}(P)$ containing $x$. We now count the cardinality of $C_x$. Every element in $C_x$ is adjacent to $x$. Moreover, as $x$ is of order $p$, every element is a root of $x$. The number of elements $y$ such that $y$ is $a$-th root of $x$, where $\gcd(a,p)=1$, is $p-1$. Let $y=(y_1,y_2,\dots, y_k)$ be a $p$-th root of $x$. For this to happen, each $y_i$ has $p$ choices. Thus, the number of $p$-th roots of $x$ is $p^k$. Therefore, the number of $ap$-th roots of $x$, where $\gcd(a,p)=1$, is $(p-1)p^k.$ Let $y=(y_1,y_2,\dots, y_k)$ be a $p^2$-th root of $x$. For this to happen, each $y_i$ has $p^2$ choices. Thus, the number of $p^2$-th roots of $x$ is $p^{2k}$. Therefore, the number of $ap^2$-th roots of $x$, where $\gcd(a,p)=1$, is $(p-1)p^{2k}.$ We can continue this till $t_1-1$. Let $y=(y_1,y_2,\dots, y_k)$ be a $p^{t_1-1}$-th root of $x$. For this to happen, each $y_i$ has $p^{t_1-1}$ many choices. Thus, the number of $p^{t_1-1}$-th roots of $x$ is $p^{(t_1-1)k}$. Therefore, the number of $ap^{t_1-1}$-th roots of $x$, where $\gcd(a,p)=1$, is $(p-1)p^{(t_1-1)k}.$ Finally, we observe that the element $x$ cannot have a $p^{t_1}$-th root. 

Therefore, the total number of roots of $x$, and hence the size of the component $C_x$ is \[(p-1)+(p-1)p^k+(p-1)p^{2k}+\dots+(p-1)p^{(t_1-1)k}= (p-1)\frac{p^{t_1k}-1}{p^{k}-1}.\] By \cref{lem:number-of-roots-p-order-element}, any component $C_x$ of $\mathcal{G}_E^{*}(P)$ has size at least $(p-1)\frac{p^{t_1k}-1}{p^{k}-1}.$ This completes the proof.
\end{proof}
\begin{proof}[Proof of \cref{thm:vc-abelian-exact}]
The proof follows from \cref{Prop: VC enh Nilpo no cyclic no Q} and \cref{Lemma:min-pos-size-abel}.  
\end{proof}

\subsection*{Acknowledgement} 
 The second author acknowledges SERB National Post Doctoral Fellowship (File No. PDF/2021/001899) and NBHM Post Doctoral Fellowship during the preparation of this work and profusely thanks Science and Engineering Research Board, Govt. of India for this funding.  The first and second authors
acknowledge the DST FIST program - 2021.
 The second author also acknowledges excellent working conditions in the Department of Mathematics, Indian Institute of Science.

\bibliographystyle{amsplain}
\bibliography{gen-inv-lcp.bib}

\end{document}